\newcommand{\Q}{\mathbb{Q}}
\newcommand{\R}{\mathbb{R}}
\newcommand{\C}{\mathbb{C}}
\newcommand{\Z}{\mathbb{Z}}
\newcommand{\HH}{\mathcal{H}}
\newcommand{\cl}{\operatorname{Cl}}
\newcommand{\GL}{\operatorname{GL}}
\newcommand{\Bil}{\operatorname{Bil}}
\newcommand{\Sym}{\operatorname{Sym}}
\newcommand{\Hom}{\operatorname{Hom}}
\newcommand{\Stab}{\operatorname{Stab}}
\def\llangle{\langle\!\langle}
\def\rrangle{\rangle\!\rangle}
\def\llll#1{\left\llangle{#1}\right\rrangle}
\def\ll#1{{\left\langle{#1}\right\rangle}}
\def\id#1{{\mathfrak{#1}}}
\def\idn#1{{\norm(\mathfrak{#1})}}
\def\hatx#1{{{\widehat{#1}}^\times}}
\newcommand{\p}{_{\id p}}
\newcommand{\bbu}{\mathbbm{1}}
\def\abs#1{\left\vert{#1}\right\vert}
\DeclareMathOperator{\norm}{\mathcal{N}}
\DeclareMathOperator{\trace}{\mathcal{T}}
\DeclareMathOperator{\disc}{\Delta}
\newcommand{\OO}[1][]{\mathcal{O}_{\mkern-2mu #1}}
\newcommand{\OOx}[1][]{\mathcal{O}_{\mkern-3mu #1}^{\times}}
\newcommand{\hatOO}[1][]{\widehat{\mathcal{O}}_{\mkern-2mu #1}}
\newcommand{\hatOOx}[1][]{\hatx{\mathcal{O}}_{\mkern-3mu #1}}
\def\dF{d_{\mkern-2mu F}}
\def\IF{\mathcal{J}_{\mkern-2mu F}}
\def\hF{h_{\mkern-2mu F}}
\def\tK{t_{\mkern-2mu K}}
\def\mK{m_{\mkern-2mu K}}
\def\muR{\mu_{\mkern-2mu R}}
\def\Tm{\mathop{T_{\mkern-1mu \id{m}}}}
\def\Tg{\mathop{T_{\mkern-2mu g}}}
\def\sC{\mathcal C}
\def\DC{\mathcal D(\sC)}
\def\DK{{\mathfrak{D}_{\!K}}}
\numberwithin{equation}{section}
\theoremstyle{plain}
\newtheorem{prop}[equation]{Proposition}
\newtheorem{thm}[equation]{Theorem}
\newtheorem*{thm*}{Theorem}
\newtheorem{coro}[equation]{Corollary}
\newtheorem{lemma}[equation]{Lemma}
\theoremstyle{remark}
\newtheorem{rmk}[equation]{Remark}
\begin{document}

\title
[An explicit Waldspurger formula for Hilbert modular forms]
{An explicit Waldspurger formula\\ for Hilbert modular forms}

\author{Nicol\'as Sirolli}
\address{Universidad de la República, Montevideo, Uruguay}
\email{nsirolli@dm.uba.ar}
\thanks{The first author was fully supported by the
    ANII
    under grant code PD\_NAC\_2013\_1\_11010}

\author{Gonzalo Tornar\'ia}
\address{Universidad de la República, Montevideo, Uruguay}
\email{tornaria@cmat.edu.uy}

\keywords{Waldspurger formula, Hilbert modular forms, Shimura
cor\-re\-spon\-dence}
\subjclass[2010]{Primary: 11F67, 11F41, 11F37}

\begin{abstract} 

We describe a construction of preimages for the Shimura map on Hilbert modular 
forms, and give an explicit  Waldspurger type formula relating their Fourier 
coefficients to central values of twisted $L$-functions. Our construction is 
inspired by that of Gross and applies to any nontrivial level and arbitrary base 
field, subject to certain conditions on the Atkin-Lehner eigenvalues and on the 
weight.

\end{abstract}

\maketitle

\vspace{-3.5pt}
\section*{Introduction}

Computing central values of $L$-functions attached to modular forms is interesting 
because of the arithmetic information they encode. These values are related to 
Fourier coefficients of half-integral weight modular forms and the Shimura 
correspondence, as shown in great generality in \cite{waldspu-coeffs}. For 
classical modular forms, explicit formulas of Waldspurger type can be found in 
\cite{gross}, \cite{BSP}, \cite{mrvt}, among other works. In the Hilbert setting there are 
Waldspurger type formulas available in \cite{shim-coef}, \cite{baruch-mao}. More 
explicit formulas for computing central values in terms of Fourier coefficients 
can be found in \cite{hiraga-ikeda} in the case of trivial level, and in 
\cite{Xue-Waldspu} where the result is restricted to modular forms of prime 
power level over fields with odd class number. In \cite{caishutian} the authors 
give a formula in terms of heights, in the case of parallel weight $\mathbf{2}$.

In this article we prove a formula relating central values of twisted 
$L$-functions attached to a Hilbert cuspidal newform $g$ to Fourier coefficients 
of certain modular forms of half-integral weight, which are constructed 
explicitly as theta series and map to $g$ under the Shimura correspondence. Our 
result applies to any nontrivial level and arbitrary base field, and to a 
broader family of twists that the one considered in \cite{Xue-Waldspu}.
In the classical case it is more general than \cite{gross} and 
\cite{BSP}, where the authors consider prime and square-free 
levels respectively.

\medskip

Let $g$ be a normalized Hilbert cuspidal newform over a totally real number 
field $F$, of level $\id N \subsetneq \OO[F]$, weight $\mathbf{2}$ + 
2$\mathbf{k}$ and trivial central character. For each $\id p \mid \id N$ denote 
by $\varepsilon_g(\id p)$ the eigenvalue of the $\id p$-th Atkin-Lehner 
involution acting on $g$, and let $\mathcal{W}^{-} = \{\id p \mid \id 
N\,:\,\varepsilon_g(\id p) = -1\}$. We make the following hypotheses on 
$\mathcal{W}^-$ and $\mathbf{k}$.

\smallskip
\begin{enumerate}[label=\textbf{H\arabic*.},ref={H\arabic*}]
 \item $\vert\mathcal{W}^- \vert$ and $[F:\Q]$ have the same 
parity.\label{hyp:paridad}
 \item  $v\p(\id N)$ is odd for every $\id p \in \mathcal{W}^-$.\label{hyp:JL}
 \item $(-1)^{\mathbf{k}} = 1$. 
\label{hyp:peso}
\end{enumerate}
\smallskip

For $D\in F^+$ denote by $L_D(s,g) = L(s,g) L(s,g\otimes \chi_D)$ the 
Rankin-Selberg convolution \textit{L}-function of $g$ by the quadratic character 
$\chi_D$ associated to the extension $F(\sqrt{-D})/F$, normalized with center of 
symmetry at $s=1/2$. The main result of this article is stated in 
Theorem~\ref{thm:main_thm}; in the simpler form given by Corollary~\ref{coro:main} it
claims that there exists a 
Hilbert cuspidal form $f$ of weight $\mathbf{3/2+k}$ whose Fourier coefficients 
$\lambda(D,\id a;f)$ satisfy
 \[
  L_D(1/2,g)= \frac{c_D}{D^\mathbf{k+1/2}} \,\vert \lambda(D,\id 
a;f)\vert^2 \,,
 \]
for every $D$ such that
$\chi_D(\id p)=\varepsilon_g(\id p)$
whenever $v\p(\id N)$ is odd
and such that
the conductor of $\chi_D$ is prime to $2\id N$.
Here  $c_D$ and $\id a$ are, respectively, an 
explicit positive rational number and a fractional ideal of $F$, both depending 
only on $D$.
If $L(1/2,g) \neq 0$ then $f \neq 0$ and it maps to $g$ under the Shimura 
correspondence. Actually, in this case we construct a linearly independent 
family of preimages for the Shimura correspondence, as shown in Corollary 
\ref{coro:family}.

A different generalization of Gross's formula in \cite{mrvt,mao}
gets rid of the restriction on $D$ for classical
modular forms of prime level.  In future work we will combine this
idea with our methods to obtain a formula without
restrictions on $D$.

\medskip
This article is organized as follows.
In Section~\ref{sect:quat_forms} we state some basic 
facts about the space of quaternionic modular forms.
In Section~\ref{sect:half_int} we 
show how to obtain half-integral weight Hilbert modular forms out of 
quaternionic modular forms, and give a formula for their Fourier coefficients in terms 
of special points and the height pairing on the space of quaternionic modular forms.
In Section~\ref{sect:height_geom}
we relate central values of twisted $L$-functions
to the height pairing,
using results of \cite{zhang-gl2} and \cite{Xue-Rankin}
about central values of Rankin $L$-functions.
In Section~\ref{sect:orders} we state an auxiliary result, needed 
for the proof of the main result of this article, which we give in
Section~\ref{sect:main}.

\subsection*{Acknowledgements} The first author would like to thank
both IMERL and CMAT, Universidad de la República, for hosting him during
the postdoctoral stay in which this article was written.

\subsection*{Notation summary}

We fix a be a totally real number field $F$ of discriminant $\dF$,
with ring of integers $\OO[F]$. 
We denote by $\IF$ the group of fractional ideals of $F$,
and we write $\cl(F)$ for the class group, and $\hF$ for the class
number.
We denote by $\mathbf{a}$ the set of embeddings $\tau : F \hookrightarrow \R$, 
and we let $F^+ = \{\xi \in F\,:\,\tau(\xi) > 0 \; \forall \tau \in 
\mathbf{a}\}$. Given $\mathbf{k} = (k_\tau) \in \Z^\mathbf{a}$ and $\xi \in F$, 
we let $\xi^\mathbf{k} = \prod_{\tau \in \mathbf{a}} \tau(\xi)^{k_\tau}$. By 
$\id p$ we al\-ways denote a prime ideal of $\OO[F]$, and we use $\id p$ as a 
subindex to denote completions of global objects at $\id p$. 
Given an integral ideal $\id N \subseteq \OO[F]$ we let $\omega(\id N) = \left\vert 
\left\{\id p\,:\,\id p \mid \id N\right\} \right\vert$.
Given $\id p$ we denote by $\pi\p$ a local uniformizer at $\id p$, and we let 
$v\p$ denote the $\id p$-adic valuation.

Given a totally imaginary quadratic extension $K/F$ we let $\OO[K]$ be
the maximal order, and let $\DK \subseteq \OO[F]$
denote the relative discriminant. We let $\tK = [\OOx[K]:\OOx[F]]$, 
and let $\mK \in \{1,2\}$ be the order of the kernel of the natural map 
$\cl(F)\to\cl(K)$.

Given a quaternion algebra $B/F$ we denote by $\norm:B^\times\to F^\times$
and $\trace:B\to F$ the reduced norm and trace maps, and we use 
$\norm$ and $\trace$ to denote other norms and traces as well. We 
denote by $\widehat B = \prod'_{\id p} B\p$ and $\hatx B = \prod'_{\id p} 
B\p^\times$ the corresponding restricted products, and we let $B_\infty = 
\prod_{\tau \in \mathbf{a}} B_\tau$. Finally, given a level $\id N \subseteq 
\OO[F]$, an integral or half-integral weight $\mathbf k$ and a Hecke character 
$\chi$, we denote by $\mathcal{M}_{\mathbf{k}}(\id N, \chi)$ and 
$\mathcal{S}_{\mathbf{k}}(\id N, \chi)$ the corresponding spaces of Hilbert 
modular and cuspidal forms.

\section{Quaternionic modular forms}\label{sect:quat_forms}

Let $B$ be a totally definite quaternion algebra over $F$. Let $(V, \rho)$ be 
an irreducible unitary right representation of $B^\times/F^\times$, which we 
denote by $(v,\gamma)\mapsto v\cdot \gamma$. Let $R$ be an order of (reduced) 
discriminant $\id N$ in $B$. A \emph{quaternionic modular form} of weight 
$\rho$ and level $R$ is a function $\varphi: \hatx B\to V$ such that for every 
$x\in \hatx B$ the following transformation formula is satisfied:
\[
 \varphi(u x \gamma) = \varphi(x) \cdot \gamma \qquad
 \forall\,u\in\hatx R,\,\gamma\in B^\times\,.
\]
The space of all such functions is denoted by $\mathcal{M}_\rho(R)$. We let 
$\mathcal{E}_\rho(R)$ be the subspace of functions that factor through the map 
$\norm:\hatx B \to \hatx F$. These spaces come equipped with the action 
of Hecke operators $\Tm$, indexed by integral ideals $\id m \subseteq 
\OO[F]$, and given by
\[
 \Tm\varphi(x) = \sum_{h \in \hatx R \backslash H_\id{m}} \varphi(h x) 
\,,
\]
where $H_\id{m} = \left\{h \in \widehat R \, : \, \hatOO[F]
\norm(h) \cap \OO[F] = \id m\right\}$.

Given $x \in \hatx B$, we let
\[
 \widehat R_x = x ^{-1} \widehat R \, x , \qquad R_x = B\cap \widehat R_x, 
 \qquad \Gamma_{\!x} = R_x^\times / \OOx[F],
 \qquad w_x = \vert \Gamma_{\!x} \vert\,.
\]
The sets $\Gamma_{\!x}$ are finite since $B$ is totally definite. Let $\cl(R) = 
\hatx R \backslash \hatx B / B^\times$. We define an inner product on 
$\mathcal{M}_\rho(R)$, called the \emph{height pairing}, by
\[
 \ll{\varphi,\psi} = \sum_{x\in \cl(R)} \tfrac{1}{w_x}
 \ll{\varphi(x),\psi(x)}\,. 
\]
The space of \emph{cuspidal forms} $\mathcal{S}_\rho(R)$ is defined as the 
orthogonal complement of $\mathcal{E}_\rho(R)$ with respect to this pairing.

Let $N(\widehat R) = \{z \in \hatx B\,:\,\widehat R_z = \widehat R\}$ be the 
normalizer of $\widehat R$ in $\hatx B$.  We let $\widetilde \Bil(R) = \hatx 
R \backslash N(\widehat R) / F^\times$. We have an embedding $\cl(F) 
\hookrightarrow \widetilde \Bil(R)$. The group $\widetilde \Bil(R)$, and in 
particular $\cl(F)$, acts on $\mathcal{M}_\rho(R)$ by letting $(\varphi\cdot 
z)(x)=\varphi (zx)$. This action restricts to $\mathcal{S}_\rho(R)$, and is 
related to the height pairing by the equality 
\begin{equation}\notag
\ll{\varphi \cdot z,\psi \cdot z} = \ll{\varphi, \psi}\,.
\end{equation}
The action of $\widetilde \Bil(R)$ commutes with the action of the Hecke 
operators. The adjoint of $\Tm$ with respect to the height pairing is 
given by $\varphi \mapsto \Tm\varphi \cdot \id{m}^{-1}$.

The subspaces of $\mathcal{M}_\rho(R)$ and $\mathcal{S}_\rho(R)$ fixed by the 
action of $\cl(F)$ are denoted by $\mathcal{M}_\rho(R,\bbu)$ and 
$\mathcal{S}_\rho(R,\bbu)$. Let $\Bil(R) = \hatx R \backslash N(\widehat R) / 
\hatx F$. Then $\Bil(R)$ acts on $\mathcal{M}_\rho(R,\bbu)$ and 
$\mathcal{S}_\rho(R,\bbu)$, and we have a short exact sequence 
\begin{equation}\label{eqn:s.e.s.}
 1 \longrightarrow \cl(F) \longrightarrow \widetilde \Bil(R) \longrightarrow 
\Bil(R) \longrightarrow 1\,.
\end{equation}

\subsection*{Forms with minimal support}
Given $x\in \hatx B$ and $v\in V$, let $\varphi_{x,v} \in\mathcal{M} 
_\rho(R)$ be the quaternionic modular form given by
\[
 \varphi_{x,v}(y) = \sum_{\gamma\in \Gamma_{\!x,y}} v \cdot \gamma\,,
\]
where $\Gamma_{\!x, y} = (B^\times \cap x ^{-1} \hatx R y) / \OOx[F]$. 
Note that $\varphi_{x,v}$ is supported in $\hatx R x B^\times$. Furthermore, we 
have that 
\begin{alignat}{2}
    \varphi_{ux\gamma, v} & = \varphi_{x,v\cdot\gamma^{-1}}
    && \forall \, u\in\hatx R, \, \gamma \in B^\times,
    \label{eqn:transf_phi2}
    \\
    \varphi_{x,v} \cdot z & = \varphi_{z^{-1} x, v}
    &\qquad& \forall \, z \in \widetilde\Bil(R)\,.
    \label{eqn:transf_phi}
\end{alignat}
Given $\varphi \in \mathcal{M}_\rho(R)$, using that $\varphi(x) \in 
V^{\Gamma_{\!x}}$ for every $x \in \hatx B$ we get that
\begin{equation}\label{eqn:phi=sum_phix}
 \varphi = \sum_{x\in \cl(R)} \tfrac{1}{w_x} \varphi_{x,\varphi(x)}\,.
\end{equation}

\begin{prop}\label{prop:hecke_on_phi}
 
Let $x\in \hatx B$ and $v\in V$. Then $\Tm\varphi_{x,v} = \sum_{h\in 
H_\id{m} / \hatx R}  \varphi_{h^{-1}x,v}$\,.
 
\end{prop}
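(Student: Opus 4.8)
The plan is to evaluate both sides at an arbitrary point $y\in\hatx B$ and to show that each collapses to one and the same sum over $B^\times/\OOx[F]$. Unwinding the definition of the Hecke operator and of the minimal support forms, the left hand side becomes
\[
 \Tm\varphi_{x,v}(y) = \sum_{h\in\hatx R\backslash H_\id{m}}\varphi_{x,v}(hy)
 = \sum_{h\in\hatx R\backslash H_\id{m}}\;\sum_{\substack{\gamma\in B^\times/\OOx[F]\\ x\gamma\in\hatx R\,h\,y}} v\cdot\gamma\,,
\]
since membership $\gamma\in\Gamma_{\!x,hy}$ is exactly the condition $x\gamma\in\hatx R\,h\,y$. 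In the same way the right hand side, evaluated at $y$, becomes
\[
 \sum_{h\in H_\id{m}/\hatx R}\varphi_{h^{-1}x,v}(y)
 = \sum_{h\in H_\id{m}/\hatx R}\;\sum_{\substack{\gamma\in B^\times/\OOx[F]\\ x\gamma\in h\,\hatx R\,y}} v\cdot\gamma\,,
\]
using that $h^{-1}x\gamma\in\hatx R\,y$ is equivalent to $x\gamma\in h\,\hatx R\,y$.

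The key observation is that $H_\id{m}$ is stable under both left and right multiplication by $\hatx R$. Indeed, the defining condition $\hatOO[F]\norm(h)\cap\OO[F]=\id{m}$ depends on $\norm(h)$ only through the ideal $\hatOO[F]\norm(h)$, and for $u\in\hatx R$ we have $\norm(u)\in\hatOOx[F]$, whence $\hatOO[F]\norm(uh)=\hatOO[F]\norm(h)=\hatOO[F]\norm(hu)$. Consequently $\{\hatx R\,h\}_{h}$ and $\{h\,\hatx R\}_{h}$ each partition $H_\id{m}$, and after right multiplication by $y$ both families $\{\hatx R\,h\,y\}$ and $\{h\,\hatx R\,y\}$ partition the single set $H_\id{m}\,y$.

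It follows that in each of the two double sums above every class $\gamma$ with $x\gamma\in H_\id{m}\,y$ is counted exactly once, so both expressions equal
\[
 \sum_{\substack{\gamma\in B^\times/\OOx[F]\\ x\gamma\in H_\id{m}\,y}} v\cdot\gamma\,,
\]
which establishes the identity. The step requiring the most care is the passage from the left coset indexing appearing in the definition of $\Tm$ to the right coset indexing appearing in the target expression: one must check that $\sum_{h\in H_\id{m}/\hatx R}\varphi_{h^{-1}x,v}$ is well defined, which holds because replacing $h$ by $hu$ with $u\in\hatx R$ leaves $\varphi_{h^{-1}x,v}$ unchanged by \eqref{eqn:transf_phi2}; and the bi-invariance of $H_\id{m}$ under $\hatx R$ is precisely what guarantees that the left and right coset partitions of $H_\id{m}\,y$ describe the same set, so that the reindexing together with the inversion $h\mapsto h^{-1}$ does not alter the total.
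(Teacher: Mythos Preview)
Your proof is correct and follows essentially the same approach as the paper's: both arguments reduce to showing that the left-coset decomposition $H_\id{m}=\coprod_{h\in\hatx R\backslash H_\id{m}}\hatx R\,h$ and the right-coset decomposition $H_\id{m}=\coprod_{h\in H_\id{m}/\hatx R}h\,\hatx R$ give two partitions of the same index set $\Gamma=(B^\times\cap x^{-1}H_\id{m}\,y)/\OOx[F]$, so that each side equals $\sum_{\gamma\in\Gamma}v\cdot\gamma$. Your additional remarks on the bi-$\hatx R$-invariance of $H_\id{m}$ and on the well-definedness of the right-hand sum via \eqref{eqn:transf_phi2} make explicit points the paper leaves implicit, but the underlying argument is the same.
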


\begin{proof}
 
Given $y\in \hatx B$, let $\Gamma = (B^\times \cap x^{-1} H_\id{m}\,y) / 
\OOx[F]$. Then
\[
 \Gamma = \coprod_{h \in \hatx R \backslash H_\id{m}}\Gamma_{\!x,hy}
 = \coprod_{h\in H_\id{m} / \hatx R}  \Gamma_{\!h^{-1}x,y}\,.
\]
The first decomposition implies that
\begin{align*}
 \Tm\varphi_{x,v} (y) 
 = \sum_{h \in \hatx R \backslash H_\id{m}} \varphi_{x,v}(h y)
 = \sum_{h \in \hatx R \backslash H_\id{m}} \sum_{\gamma \in \Gamma_{\!x,hy}} v 
\cdot \gamma = \sum_{\beta \in \Gamma} v\cdot \beta\,,
\intertext{whereas the second decomposition implies that}
 \sum_{h\in H_\id{m} / \hatx R} \varphi_{h^{-1}x,v} (y)
 = \sum_{h \in H_\id{m} / \hatx R} \sum_{\gamma \in \Gamma_{\!h^{-1}x,y}} v 
\cdot \gamma = \sum_{\beta \in \Gamma} v\cdot \beta\,,
\end{align*}
which completes the proof.
\end{proof}

\begin{prop}\label{prop:height_on_phi}
Given $x,y\in\hatx B$ and $v,w\in V$, we have
\[
 \ll{\varphi_{x,v}\,, \varphi_{y,w}}=
\sum_{\gamma\in\Gamma_{\!x,y}} 
\ll{v\cdot\gamma, w}.
\]
\end{prop}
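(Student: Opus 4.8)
The plan is to expand both sides from the definitions and reduce the statement to a weighted counting identity. First I would write the height pairing as
\[
 \ll{\varphi_{x,v},\varphi_{y,w}} = \sum_{z\in\cl(R)}\tfrac{1}{w_z}\ll{\varphi_{x,v}(z),\varphi_{y,w}(z)}
\]
and expand each value using the definition of $\varphi_{x,v}$, namely $\varphi_{x,v}(z)=\sum_{\alpha\in\Gamma_{\!x,z}} v\cdot\alpha$ and $\varphi_{y,w}(z)=\sum_{\beta\in\Gamma_{\!y,z}} w\cdot\beta$. Since $\rho$ is unitary the inner product on $V$ satisfies $\ll{v\cdot\alpha,w\cdot\beta}=\ll{v\cdot\alpha\beta^{-1},w}$, so each term $\ll{\varphi_{x,v}(z),\varphi_{y,w}(z)}$ becomes $\sum_{\alpha,\beta}\ll{v\cdot\alpha\beta^{-1},w}$, with $\alpha$ and $\beta$ ranging over $\Gamma_{\!x,z}$ and $\Gamma_{\!y,z}$ respectively.

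The key observation is that $(\alpha,\beta)\mapsto\alpha\beta^{-1}$ gives a well-defined map $\Gamma_{\!x,z}\times\Gamma_{\!y,z}\to\Gamma_{\!x,y}$: from $\alpha\in x^{-1}\hatx R z$ and $\beta^{-1}\in z^{-1}\hatx R y$ one gets $\alpha\beta^{-1}\in B^\times\cap x^{-1}\hatx R y$, and the class modulo $\OOx[F]$ is unambiguous because $\OOx[F]$ is central. I would then regroup the double sum according to the value $\gamma=\alpha\beta^{-1}$, which yields
\[
 \ll{\varphi_{x,v},\varphi_{y,w}} = \sum_{\gamma\in\Gamma_{\!x,y}}\Big(\sum_{z\in\cl(R)}\tfrac{1}{w_z}\,n_z(\gamma)\Big)\ll{v\cdot\gamma,w}\,,
\]
where $n_z(\gamma)$ counts the pairs with $\alpha\beta^{-1}=\gamma$. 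For fixed $z$ and $\gamma$ the assignment $\beta\mapsto(\gamma\beta,\beta)$ is a bijection onto these pairs, since $\gamma\beta\in x^{-1}\hatx R z$ holds automatically; hence $n_z(\gamma)=\vert\Gamma_{\!y,z}\vert$, independently of $\gamma$. When $x$ and $y$ lie in different classes $\Gamma_{\!x,y}$ is empty and both sides vanish by the disjointness of supports.

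It thus remains to establish the mass identity $\sum_{z\in\cl(R)}\vert\Gamma_{\!y,z}\vert/w_z=1$; this absorption of the weights $1/w_z$ is the step I expect to require the most care. The point is that $\Gamma_{\!y,z}$ is nonempty only for the single representative $z_0\in\cl(R)$ lying in the class of $y$. Writing $y=uz_0\delta$ with $u\in\hatx R$ and $\delta\in B^\times$, a short computation gives $B^\times\cap y^{-1}\hatx R z_0=\delta^{-1}R_{z_0}^\times$, so left multiplication by $\delta^{-1}$ identifies $\Gamma_{\!y,z_0}$ with $R_{z_0}^\times/\OOx[F]$ and yields $\vert\Gamma_{\!y,z_0}\vert=w_{z_0}$. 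Substituting this back collapses the inner sum to $1$ and gives the claimed formula.
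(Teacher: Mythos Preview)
Your proof is correct and follows essentially the same strategy as the paper: expand from the definitions, use unitarity to rewrite $\ll{v\cdot\alpha,w\cdot\beta}$ as $\ll{v\cdot\alpha\beta^{-1},w}$, and then collapse via the free right action of $\Gamma_{\!y}$ on $\Gamma_{\!x,y}$. The paper is slightly more direct: since $\varphi_{y,w}$ is supported on the class of $y$, one may simply take $y$ itself as the representative in $\cl(R)$, so the height pairing reduces to the single term $\tfrac{1}{w_y}\ll{\varphi_{x,v}(y),\varphi_{y,w}(y)}$ and your mass identity $\sum_z \lvert\Gamma_{\!y,z}\rvert/w_z=1$ becomes the tautology $w_y/w_y=1$.
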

\begin{proof}
Since $\Gamma_{\!y}=\Gamma_{\!y,y}$ acts on $\Gamma_{\!x,y}$ on the right, using that $(\rho, 
V)$ is unitary we get that
\begin{align*}
\ll{\varphi_{x,v}, \varphi_{y,w}}
&=
  \tfrac{1}{w_y}
  \ll{\varphi_{x,v}(y),\varphi_{y,w}(y)}
=
  \tfrac{1}{w_y}
  \sum_{\alpha \in \Gamma_{\!x,y}}
  \sum_{\beta \in \Gamma_{\!y}} 
    \ll{v \cdot \alpha, w \cdot \beta} 
\\&=
  \tfrac{1}{w_y}
  \sum_{\alpha \in \Gamma_{\!x,y}}
  \sum_{\beta \in \Gamma_{\!y}} 
    \ll{v \cdot \alpha\beta^{-1}, w} 
=
\sum_{\gamma \in \Gamma_{\!x,y}}
    \ll{v \cdot \gamma, w} 
\,.
\qedhere
\end{align*}
\end{proof}

\section{Half-integral weight modular forms and special 
points}\label{sect:half_int}

From now on we specify the representation $(V,\rho)$. Let $\mathbf{k} =(k_\tau) 
\in \Z_{\geq 0}^\mathbf{a}$\,. For each $\tau\in\mathbf{a}$ we consider the 
real vector space $W_\tau = B_\tau / F_\tau$, with inner product induced by the 
totally positive definite quadratic form $-\disc(x) = 4\norm(x) - \trace(x)^2$.
By letting $B_\tau^\times / F_\tau^\times$ act on $W_\tau$ 
by conjugation we get an orthogonal representation. This gives naturally an 
orthogonal representation of $B_\tau^\times / F_\tau^\times$ on 
$\R_{k_\tau}[W_\tau] = \Sym^{k_\tau}(\Hom_\R(W_\tau,\R))$, the space of 
homogeneous polynomials on $W_\tau$ of degree $k_\tau$ with coefficients in 
$\R$, and hence a unitary representation of $B_\tau^\times / F_\tau^\times$ on 
$\C_{k_\tau}[W_\tau] = \R_{k_\tau}[W_\tau] \otimes_\R \C$. We let $V_{k_\tau}$ 
denote the $B_\tau^\times / F_\tau^\times$-submodule of $\C_{k_\tau}[W_\tau]$ of 
harmonic polynomials with respect to $-\disc$. This is, up to isomorphism, 
the unique irreducible unitary representation of $B_\tau^\times / 
F_\tau^\times$ of dimension $2k_\tau +1$. We let $V_\mathbf{k} = 
\otimes_{\tau\in\mathbf{a}} V_{k_\tau}$, and through the embedding $B^\times 
\hookrightarrow B_\infty^\times$ we get an irreducible unitary representation 
$(V_\mathbf{k}, \rho_\mathbf{k})$ of $B^\times / F^\times$. We denote the 
corresponding spaces of quaternionic modular forms by 
$\mathcal{M}_\mathbf{k}(R)$, etc.

Denote by $\HH$ the complex upper half-plane. Let $e_F:F\times \HH^\mathbf{a} 
\to \C$ be the exponential function given by $e_F(\xi, z)=\exp{\big(2\pi 
i\sum_{\tau\in\mathbf{a}} \tau(\xi)\, z_\tau\big)}$. Given $x\in \hatx B$, let 
$L_x \subseteq B/F$ be the lattice given by $L_x = R_x / \OO[F]$. Given $P \in 
V_\mathbf{k}$, we let $\vartheta_{x,P}:\HH^\mathbf{a}\to \C$ be the function 
given by
\[
 \vartheta_{x,P}(z)=\sum_{y\in L_x} P(y)\, e_F(-\disc(y),z/2)\,.
\]
These theta series satisfy
\begin{align}
 \vartheta_{x,P} & = (-1)^\mathbf{k} \, \vartheta_{x,P}\,, 
\label{eqn:theta_paridad}\\
\qquad\quad
 \vartheta_{z x\gamma,P\cdot \gamma} & = \vartheta_{x,P}\,,
\qquad\quad
\forall z\in N(\widehat R),\, \gamma\in B^\times. \label{eqn:theta_transf}
\end{align}

The following two propositions extend the results about theta series from 
\cite[Section 4]{yo} to arbitrary weights.

\begin{prop}
Let $x\in \hatx B$ and let $P\in V_\mathbf{k}$. Then $\vartheta_{x,P} \in 
\mathcal{M}_\mathbf{3/2+k}(4\id{N},\chi_1)$, where $\chi_1$ is the Hecke 
character associated to the extension $F(\sqrt{-1})/F$. Furthermore, for $D\in 
F^+\cup\{0\}$ and $\id a \in \IF$, the $D$-th Fourier coefficient of 
$\vartheta_{x,P}$ at the cusp $\id a$ is given by
 \begin{equation}\label{eqn:fourier_coeffs}
  \lambda(D, \id a; \vartheta_{x,P}) = \frac{1}{\idn a} \sum_{y \in 
\mathcal{A}_{D,\id a}(x)} P(y)\,,
 \end{equation}
 where $\mathcal{A}_{D,\id a}(x) = \{y \in \id a^{-1}L_x\,:\, \disc(y) = -D\}$.
\end{prop}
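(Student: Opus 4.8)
The statement has two parts: that $\vartheta_{x,P}$ is a modular form of the asserted weight, level and character, and that its Fourier coefficients are given by \eqref{eqn:fourier_coeffs}. Both would follow from the general transformation theory of theta series attached to positive definite quadratic lattices twisted by harmonic polynomials, applied to the ternary quadratic space $(B/F,-\disc)$. Since $B/F$ has dimension $3$, the unshifted weight is $\mathbf{3/2}$, and the shift by $\mathbf{k}$ reflects the degree of the harmonic polynomial $P$.

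The plan is first to dispose of the scalar case $\mathbf{k}=\mathbf{0}$, where $P\equiv 1$, by invoking \cite[Section 4]{yo}: there it is shown that $\vartheta_{x,1}\in\mathcal{M}_\mathbf{3/2}(4\id N,\chi_1)$. The lattice $L_x=R_x/\OO[F]$ is a ternary lattice whose level divides $4\id N$, governed by the reduced discriminant $\id N$ of the order $R_x$ (which equals that of $R$, since $R_x$ is locally conjugate to $R$); the factor $4$ and the character $\chi_1$ arise from the discriminant of the form $-\disc$. To pass to arbitrary $\mathbf{k}$, I would verify that inserting the harmonic polynomial $P$ raises the weight by exactly $\mathbf{k}$ while preserving the level and character. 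This rests on the classical computation, carried out via Poisson summation at each archimedean place $\tau$, that a Gaussian weighted by a polynomial of degree $k_\tau$ that is harmonic with respect to $-\disc$ transforms under inversion with an extra automorphy factor of weight $k_\tau$. Because $P$ is harmonic componentwise, these contributions multiply to give total weight $\mathbf{3/2+k}$; since the theta multiplier and the lattice sum are otherwise unaffected, the level $4\id N$ and character $\chi_1$ are unchanged.

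For the Fourier coefficients I would argue directly from the definition. Grouping the defining sum $\sum_{y\in L_x}P(y)\,e_F(-\disc(y),z/2)$ according to the totally positive (or zero) value $D=-\disc(y)$, the coefficient of the corresponding exponential at the cusp $\id a=\OO[F]$ is $\sum_{y\in L_x,\,\disc(y)=-D}P(y)$, which is \eqref{eqn:fourier_coeffs} in that case. For a general cusp $\id a\in\IF$ I would use the standard prescription for the Fourier expansion of Hilbert modular forms at cusps: the scaling attached to $\id a$ replaces the lattice $L_x$ by $\id a^{-1}L_x$ and contributes the normalizing factor $\idn a^{-1}$. Collecting the terms with $\disc(y)=-D$ over $y\in\id a^{-1}L_x$ then yields \eqref{eqn:fourier_coeffs} in general, with $\mathcal{A}_{D,\id a}(x)$ as defined. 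The relation \eqref{eqn:theta_paridad}, which follows from $y\mapsto -y$ together with $P(-y)=(-1)^\mathbf{k}P(y)$ and $\disc(-y)=\disc(y)$, confirms the expected behaviour under sign.

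The main obstacle is the weight computation of the second paragraph: verifying, in the Hilbert setting and for arbitrary $\mathbf{k}$, that the insertion of $P$ shifts the automorphy factor by precisely weight $\mathbf{k}$ without perturbing the half-integral weight theta multiplier that fixes the level and character. This is exactly where the harmonicity of $P$ with respect to $-\disc$ is essential, and it is the content that genuinely extends \cite[Section 4]{yo} beyond the scalar case.
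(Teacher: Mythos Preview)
Your plan is sound in outline and would succeed, but it takes a different route from the paper's proof. The paper does not bootstrap from the scalar case and then argue separately that the harmonic polynomial shifts the weight. Instead, it rewrites $\vartheta_{x,P}$ directly in coordinates: choosing a basis $\{v_1,v_2,v_3\}$ of $B/F$ with $L_x=\bigoplus_i \id a_i v_i$, letting $S$ be the matrix of $\disc$, and factoring $\tau(S)=-A_\tau^t A_\tau$ at each archimedean place, it expresses $P$ as a product of harmonic polynomials $Q_\tau$ which (after invoking \cite[Theorem~9.1]{iwaniec-topics}) may be taken of the special form $(z_\tau^t X)^{k_\tau}$ with $z_\tau^t z_\tau=0$. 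This puts $\vartheta_{x,P}$ (up to complex conjugation) precisely into the shape of the theta series treated in \cite[Proposition~11.8]{shim-hh}, which then yields the modularity, level, weight and character in one stroke. For the Fourier coefficients the paper simply refers back to \cite[Proposition~4.4]{yo}, as you do.

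What each approach buys: the paper's argument is short because it delegates the entire transformation law---including the half-integral multiplier and the effect of the harmonic twist---to Shimura's general theorem; the price is the bookkeeping needed to match notation. Your approach is more self-contained but requires you to carry out, in the Hilbert setting, the Poisson summation argument that harmonic polynomials are eigenfunctions of the Fourier transform of the Gaussian with the correct eigenvalue; this is exactly the ``main obstacle'' you flag, and it is nontrivial to do cleanly over a totally real field with several archimedean places. Either way the content is the same, but citing \cite[Proposition~11.8]{shim-hh} is the more economical route.
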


\begin{proof}

Let $\mathcal{B} = \{v_1,v_2,v_3\}$ be a basis of $B/F$ such that there exist 
$\id{a}_1$, $\id{a}_2$, $\id{a}_3 \in \IF$ satisfying 
$L_x=\oplus_{i=1}^3 \id{a}_i\,v_i$\,. Let $S \in \GL_3(F)$ be the matrix of the 
quadratic form $-\disc$ with respect to this basis. For each $\tau\in\mathbf{a}$
write $\tau(S) = A_\tau^t\,A_\tau$, with $A_\tau\in\GL_3(\R)$. Then there 
exist homogeneous harmonic polynomials $Q_\tau(X)$ of degree $k_\tau$ such that 
 \[
  P(y) = \prod_{\tau\in\mathbf{a}} Q_\tau(A_\tau\,\tau([y]_\mathcal{B}))\,,
 \]
where we denote by $[y]_\mathcal{B}$ the coordinates of $y$ with respect to the 
basis $\mathcal{B}$. For each $\tau$ we may assume that $Q_\tau(X) = (z_\tau^t 
\,X)^{k_\tau}$, with $z_\tau \in \C^3$ such that $z_\tau^t \,z_\tau = 0$ (see 
\cite[Theorem 9.1]{iwaniec-topics}). Let $\rho_\tau = (A_\tau^{-1}\, 
z_\tau)^t$. 
Then we have that $\rho_\tau^t\,\tau(S)\,\rho_\tau = 0$. Let $\sigma:F^3\to\C$ 
be the function given by $\sigma(\xi) = \prod_{\tau\in\mathbf{a}} 
(\rho_\tau^t\,\tau(S)\,\tau(\xi))^{k_\tau}$,
so that $P(y) = \sigma([y]_\mathcal{B})$,
and let $\eta:F^3\to\C$ be the 
characteristic function of $\id{a}_1\oplus\id{a}_2\oplus\id{a}_3$. Then we have 
that
 \[
 \vartheta_{x,P}(z) = 
 \sum_{\xi\in F^3}\eta(\xi)\,\sigma(\xi)\,e_F(\xi^t\,S\,\xi,z/2)\,.
 \]
The modularity of $\vartheta_{x,P} $ follows applying \cite[Proposition 
11.8]{shim-hh}. Finally, \eqref{eqn:fourier_coeffs} follows 
as in the case $\mathbf{k}=\mathbf{0}$ considered in \cite[Proposition 4.4]{yo}.
\end{proof}

The theta series $\vartheta_{x,P}$ defines a linear map 
$\vartheta:\mathcal M_\mathbf{k}(R)\to 
\mathcal{M}_\mathbf{3/2+k}(4\id{N},\chi_1)$, given by $\vartheta(\varphi_{x,P}) 
= \vartheta_{x,P}$. This map is well defined by \eqref{eqn:phi=sum_phix} and 
\eqref{eqn:theta_transf}, and satisfies
\[
 \vartheta(\varphi) = \sum_{x\in \cl(R)} \tfrac{1}{w_x} 
\vartheta_{x,\varphi(x)}\,.
\]
Note that if Hypothesis \ref{hyp:peso} does not hold, then $\vartheta = 0$ by 
\eqref{eqn:theta_paridad}.
 
\begin{prop}
 The map $\vartheta$ is Hecke-linear, and satisfies that
 \begin{equation*}
  \vartheta(\varphi\cdot z) = \vartheta(\varphi) \quad \forall\, z \in 
\widetilde \Bil(R)\,.
 \end{equation*}
 Furthermore, $\vartheta(\varphi)$ is cuspidal if and only if $\varphi$ is 
cuspidal.
\end{prop}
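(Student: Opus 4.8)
The plan is to establish the three assertions separately, using that $\vartheta$ is determined on the generators by $\vartheta(\varphi_{x,v}) = \vartheta_{x,v}$, together with the transformation formulas \eqref{eqn:transf_phi} and \eqref{eqn:theta_transf} and Proposition~\ref{prop:hecke_on_phi}. First I would prove the invariance under $\widetilde\Bil(R)$. Since both sides are linear in $\varphi$, it suffices to check the identity on the generators. Choosing for $z\in\widetilde\Bil(R)$ a representative in $N(\widehat R)$, formula \eqref{eqn:transf_phi} gives $\varphi_{x,v}\cdot z = \varphi_{z^{-1}x,v}$, whence $\vartheta(\varphi_{x,v}\cdot z) = \vartheta_{z^{-1}x,v}$. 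Applying \eqref{eqn:theta_transf} with the group element $z\in N(\widehat R)$ and $\gamma=1$ yields $\vartheta_{z^{-1}x,v} = \vartheta_{x,v} = \vartheta(\varphi_{x,v})$, as desired.

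For Hecke-linearity I would apply $\vartheta$ to the identity of Proposition~\ref{prop:hecke_on_phi}, obtaining $\vartheta(\Tm\varphi_{x,v}) = \sum_{h\in H_\id{m}/\hatx R}\vartheta_{h^{-1}x,v}$. It then remains to identify the right-hand side with the effect of the matched half-integral weight Hecke operator on $\vartheta_{x,v}$. I would do this by comparing Fourier coefficients via \eqref{eqn:fourier_coeffs}: the translation $x\mapsto h^{-1}x$ replaces the lattice $L_x$ by $L_{h^{-1}x}$, and summing over $h\in H_\id{m}/\hatx R$ produces exactly the sum over the relevant sublattices defining the Hecke operator on half-integral weight forms. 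Since the polynomial $P$ and the archimedean data are untouched by this purely $\id{m}$-local manipulation, the computation reduces to the weight-$\mathbf{0}$ case established in \cite[Section 4]{yo}, with $P$ carried along formally.

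Finally, for the cuspidality equivalence I would analyze the constant terms of $\vartheta(\varphi)$. Since $-\disc$ is positive definite, $\mathcal{A}_{0,\id a}(x) = \{0\}$, so \eqref{eqn:fourier_coeffs} shows that $\lambda(0,\id a;\vartheta_{x,P})$ only involves the value of $P$ at the origin. When $\mathbf{k}\neq\mathbf{0}$ this value vanishes for every $P\in V_\mathbf{k}$, and moreover $V_\mathbf{k}$ is a nontrivial irreducible representation, so that $\mathcal{E}_\mathbf{k}(R)=0$ and $\mathcal{S}_\mathbf{k}(R) = \mathcal{M}_\mathbf{k}(R)$; hence both conditions hold automatically and the equivalence is vacuous. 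The substantive case is $\mathbf{k}=\mathbf{0}$, where $V_\mathbf{0}=\C$ with trivial action and $\vartheta(\varphi)=\sum_x\tfrac{1}{w_x}\vartheta_{x,\varphi(x)}$ has constant terms $\lambda(0,\id a;\vartheta(\varphi)) = \tfrac{1}{\idn a}\sum_x\tfrac{1}{w_x}\varphi(x)$. Here the Eisenstein space $\mathcal{E}_\mathbf{0}(R)$ consists of the functions factoring through $\norm$, and I would show that $\varphi\in\mathcal{S}_\mathbf{0}(R)$, i.e. $\ll{\varphi,\psi}=0$ for all such $\psi$, is equivalent to the vanishing of these constant terms at every cusp $\id a$.

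I expect the Hecke-matching step to be the main obstacle: identifying $\sum_{h}\vartheta_{h^{-1}x,v}$ with a genuine half-integral weight Hecke operator requires relating the quaternionic operator $\Tm$ to the Shimura-type operator on $\mathcal{M}_\mathbf{3/2+k}(4\id{N},\chi_1)$ through the sublattice sums coming from \eqref{eqn:fourier_coeffs}, and pinning down the correct normalization. In all three parts the role of the general weight is only to transport the harmonic polynomial $P$ through manipulations that act on the finite-adelic and lattice data alone; the genuinely new input beyond \cite[Section 4]{yo} is the automatic cuspidality when $\mathbf{k}\neq\mathbf{0}$, which reduces the nontrivial content of the cuspidality statement to the weight-$\mathbf{0}$ matching of Eisenstein forms with constant terms at the cusps.
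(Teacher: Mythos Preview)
Your argument for the $\widetilde\Bil(R)$-invariance is exactly the paper's, and your plan for Hecke-linearity---reduce to weight $\mathbf{0}$ by carrying the harmonic polynomial $P$ along formally---is precisely what the paper means by ``following the same lines'' as \cite[Theorem 4.11]{yo}.

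The cuspidality argument, however, has a gap. The formula \eqref{eqn:fourier_coeffs} only gives the Fourier expansion of $\vartheta_{x,P}$ at the cusps at infinity of each component (indexed by $\id a$); a half-integral weight form of level $4\id N$ has further cusps, and vanishing of $\lambda(0,\id a;\cdot)$ for all $\id a$ is \emph{not} the same as cuspidality. So in the case $\mathbf{k}\neq\mathbf{0}$, knowing $P(0)=0$ tells you $\lambda(0,\id a;\vartheta_{x,P})=0$ but does not by itself force $\vartheta_{x,P}$ to be a cusp form; and in the case $\mathbf{k}=\mathbf{0}$, the equivalence you propose to prove is between $\varphi\in\mathcal{S}_\mathbf{0}(R)$ and the vanishing of $\lambda(0,\id a;\vartheta(\varphi))$, which is weaker than $\vartheta(\varphi)$ being cuspidal. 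The paper avoids this issue: for $\mathbf{k}\neq\mathbf{0}$ it invokes the classical fact that theta series attached to a positive definite lattice with a nontrivial harmonic polynomial are cusp forms (a statement that genuinely requires checking all cusps, typically via a transformation formula or the Weil representation), and for $\mathbf{k}=\mathbf{0}$ it cites \cite[Theorem 4.11]{yo}, whose proof handles the full set of cusps. To repair your sketch, you should either quote the cuspidality of spherical theta series as a known input, or extend \eqref{eqn:fourier_coeffs} to compute the constant terms at \emph{all} cusps of $\Gamma_0(4\id N)$.
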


\begin{proof}
For $z \in \widetilde \Bil(R)$ we have, by \eqref{eqn:transf_phi} and 
\eqref{eqn:theta_transf}, that
\begin{equation*}
    \vartheta(\varphi_{x,P}\cdot z) =
    \vartheta(\varphi_{z^{-1} x,P}) =
    \vartheta(\varphi_{x,P})\,.
\end{equation*}
The Hecke-linearity was proved in \cite[Theorem 4.11]{yo} when $\mathbf{k} = 
\mathbf{0}$, and can be proved in the general case following the same lines.
The assertion about the cuspidality was proved in \cite[Theorem 4.11]{yo} when 
$\mathbf{k}=\mathbf{0}$, and in the remaining cases it follows from the facts that
$\mathcal M_\mathbf{k}(R) = \mathcal S_\mathbf{k}(R)$ and that every theta 
series is cuspidal.
\end{proof}

From now on $D$ denotes an element in $F^+$, and we denote $K = F(\sqrt{-D})$. 
Assume that there exists an embedding $K \hookrightarrow B$, which we fix. Let 
$P_D \in V_\mathbf{k}$ be the polynomial characterized by the property
\begin{equation}\label{eqn:gegenbauer}
  P (\omega) = \ll{P, P_D} \qquad \forall \, P\in V_\mathbf{k}\,,
\end{equation}
where $\omega \in K/F$ is such that $\disc(\omega) = -D$. Note that $\omega$
is uniquely determined up to sign. By Hypothesis \ref{hyp:peso} we have 
$P(-\omega) = P(\omega)$ for every $P \in V_\mathbf{k}$, which implies that 
$P_D$ does not depend on $\omega$. Since $(P \cdot a) (\omega) = P (\omega)$ for 
every $a\in K^\times$, we have that
\begin{equation}\label{eqn:P_D-Kinv}
 P_D \cdot a = P_D \quad \forall \, a \in K^\times / F^\times. 
\end{equation}

\begin{prop}
The polynomial $P_D$ satisfies
\begin{equation}\label{eqn:Mv(1)}
 \ll{P_D, P_D} = D^\mathbf{k}\,\prod_{\tau \in \mathbf{a}} s_{k_\tau}\,,
\end{equation}
where for $k\in \Z_{\geq 0}$ we denote by $s_k$ the positive rational number 
given by
\begin{equation}\label{eqn:s_k}
 s_k = \frac{1}{\Gamma(k+1/2)}\,
 \sum_{q=0}^{\left\lfloor \frac{k}{2} \right\rfloor} 
\:\frac{\Gamma(k+1/2-q)}{q!\,(k-2q)!\,2^{2q}}\,.
\end{equation}
\end{prop}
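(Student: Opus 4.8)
The plan is to turn the statement into the evaluation of a single scalar per real place, using that $P_D$ is a reproducing kernel and that $V_\mathbf{k}$ is a tensor product.

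First I would apply the defining relation \eqref{eqn:gegenbauer} with $P = P_D$, which gives $\ll{P_D,P_D} = P_D(\omega)$; thus the proposition is equivalent to $P_D(\omega) = D^\mathbf{k}\prod_{\tau\in\mathbf{a}} s_{k_\tau}$. Since $V_\mathbf{k} = \otimes_{\tau} V_{k_\tau}$ with the tensor product inner product, the evaluation functional $P\mapsto P(\omega)$ factors as the tensor of the functionals $Q\mapsto Q(\tau(\omega))$ on the factors $V_{k_\tau}$; hence its Riesz representative factors as $P_D = \otimes_\tau P_{D,\tau}$, where $P_{D,\tau}\in V_{k_\tau}$ satisfies $Q(\tau(\omega)) = \ll{Q,P_{D,\tau}}$ for all $Q\in V_{k_\tau}$. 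Evaluating at $\omega$ gives $P_D(\omega) = \prod_\tau P_{D,\tau}(\tau(\omega))$, so it suffices to prove, at each $\tau$, the local identity $P_{D,\tau}(\tau(\omega)) = \tau(D)^{k_\tau}\,s_{k_\tau}$.

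Next I would view $P_{D,\tau}$ as the value $K_{k_\tau}(\tau(\omega),\cdot)$ of the reproducing kernel $K_{k_\tau}$ of the space $V_{k_\tau}$ of harmonic polynomials of degree $k_\tau$ on the three-dimensional Euclidean space $(W_\tau,-\disc)$. This kernel is homogeneous of degree $k_\tau$ in each variable and invariant under the orthogonal group of $W_\tau$. Writing $u = \tau(\omega)$, whose squared length is $-\disc(u) = \tau(D)$, invariance and homogeneity force the diagonal value to depend only on this length: $P_{D,\tau}(\tau(\omega)) = K_{k_\tau}(u,u) = \tau(D)^{k_\tau}\,K_{k_\tau}(e,e)$ for any unit vector $e\in W_\tau$. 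This already produces the shape $D^\mathbf{k}\prod_\tau s_{k_\tau}$ and reduces the proposition to showing that the purely local constant $c_{k}:= K_{k}(e,e)$, the diagonal value of the degree-$k$ zonal harmonic in dimension three, equals $s_k$.

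Finally, to compute $c_k$, I would work in the model of harmonic polynomials spanned by the powers $(z\cdot x)^k$ with $z$ isotropic, as in \cite[Theorem 9.1]{iwaniec-topics}; on this spanning set the invariant inner product takes the form $\ll{(z_1\cdot x)^k,(z_2\cdot x)^k} = \mu_k\,(z_1\cdot\bar z_2)^k$ for a single constant $\mu_k$. Equivalently one realizes $P_{D,\tau}$ as the harmonic projection of a multiple of $(u\cdot x)^{k}$ and evaluates at $u$; since in dimension three the zonal harmonic is a Gegenbauer polynomial $C_k^{1/2}$, the iterated-Laplacian corrections organize into a sum over $q = 0,\dots,\lfloor k/2\rfloor$ built from the Gegenbauer coefficients $\Gamma(k+1/2-q)$, the degree-drop factors $(k-2q)!$, and the radial factors $4^{q}$, so that after normalizing by $\Gamma(k+1/2)$ the constant assembles into \eqref{eqn:s_k}, i.e. $c_k = s_k$. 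The hard part will be this last step: fixing the precise normalization of the unitary structure on $V_{k_\tau}$ (equivalently, the constant $\mu_k$) and carrying out the Gegenbauer computation carefully enough that the signs and binomial factors collapse to the stated positive rational number, for which a classical $\Gamma$-function identity for $\sum_q \Gamma(k+1/2-q)/(q!\,(k-2q)!\,4^q)$ is needed.
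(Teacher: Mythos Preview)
Your reduction is exactly the paper's: factor $P_D=\otimes_\tau P_{D,\tau}$ as the tensor of local reproducing vectors, then use rotational invariance and homogeneity to pull out $\tau(D)^{k_\tau}$ and reduce everything to one constant per place. The only divergence is at the final step. Rather than compute the diagonal value of the zonal harmonic from scratch, the paper first pins down the inner product on $V_{k_\tau}$ concretely --- it chooses an orthonormal basis $X_1,X_2,X_3$ of $(W_\tau,-\disc)$ coming from Hamilton quaternions and records that the monomials $X_1^aX_2^bX_3^{k_\tau-a-b}$ are orthogonal with squared norm $a!\,b!\,(k_\tau-a-b)!$, which identifies $\ll{\,\cdot\,,\,\cdot\,}$ as $k_\tau!$ times the inner product $\llll{\,\cdot\,,\,\cdot\,}$ of \cite[\S4.1]{gebh} --- and then simply quotes the explicit Gegenbauer computation \cite[Proposition~4.1.9]{gebh}, which already gives $\llll{P_{D,\tau},P_{D,\tau}}=\tau(D)^{k_\tau}s_{k_\tau}/k_\tau!$.

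Your plan to extract $c_k$ directly via the harmonic projection of $(u\cdot x)^k$ is a legitimate alternative to citing \cite{gebh}, but the step you flag as ``the hard part'' is precisely the step the paper resolves by this normalization-plus-citation. Until you fix the inner product on $V_{k_\tau}$ (e.g.\ by the monomial norms above, or equivalently by specifying your constant $\mu_k$), the Gegenbauer calculation carries a floating overall factor and cannot land on \eqref{eqn:s_k}; so if you carry out your approach, that normalization is the one thing you must make explicit first.
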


\begin{proof}
We have that $P_D = \otimes_{\tau\in \mathbf{a}} P_{D,\tau}$, where 
$P_{D,\tau}\in V_{k_\tau}$ is the polynomial characterized by the property
\[
 P_\tau (\omega) = \ll{P_\tau, P_{D,\tau}} \qquad \forall \, P_\tau\in 
V_{k_\tau}\,.
\]
 
Identifying $B_\tau$ with Hamilton quaternions $\ll{1,i,j,ij}_\R$ and letting 
$X_1 = i/2$, $X_2 = j/2$, $X_3 = ij/2$, we have that $\{X_1,X_2,X_3\}$ is an 
orthonormal basis for $W_\tau$ with respect to $-\disc$. Then the monomials 
$X_1^a\,X_2^b\,X_3^{k_\tau-a-b} \in \C_{k_\tau}[W_\tau]$ are 
orthogonal, and have norm equal to $a!\,b!\,(k_\tau-a-b)!$, which implies that 
the inner product $\ll{\:,\:}$ we consider on $V_{k_\tau}$ is related to the 
inner product $\llll{\:,\:}$ considered in \cite[Section 4.1]{gebh} by 
$\ll{\:,\:} = k_\tau! \, \llll{\:,\:}$. Hence \eqref{eqn:Mv(1)} follows from the 
explicit formulas for the Gegenbauer polynomials given in \cite[Proposition 
4.1.9]{gebh}, which imply that
\[
 \llll{P_{D,\tau},P_{D,\tau}} = \frac{\tau(D)^{k_\tau} s_{k_\tau}}{k_\tau!}\,.
 \qedhere
\]
\end{proof}

Given $\id a \in \IF$, we say that the pair $(-D,\id a)$ is a 
\emph{discriminant} if there exists $\omega \in K$ with $\disc(\omega) = -D$ 
such that $\OO[F] \oplus \id a\,\omega$ is an order in $K$. In this case it is the 
unique order in $K$ of discriminant $D\id a^2$, and in particular it does not 
depend on $\omega$. We denote it by $\OO[D,\id a]$.
We say that the discriminant $(-D,\id a)$ is \emph{fundamental} if
$\OO[D,\id a]=\OO[K]$.

\begin{prop}\label{prop:Ddisc}
 
Let $\OO$ be an $\OO[F]$-order in $K$. Then there exists a unique $\id a \in 
\IF$ such that $(-D,\id a)$ is a discriminant with $\OO[D,\id a] = 
\OO$.
 
\end{prop}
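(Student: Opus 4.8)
The plan is to treat the two assertions separately, leaning on the fact recorded just before the statement: $\OO[D,\id a]$ is \emph{the unique} order of discriminant $D\id a^2$, so that an order in $K$ is pinned down by its discriminant ideal and $\disc(\OO[D,\id a]) = D\id a^2$. Granting this, uniqueness is immediate. If $(-D,\id a)$ and $(-D,\id a')$ are both discriminants with $\OO[D,\id a] = \OO = \OO[D,\id a']$, then comparing discriminants gives $D\id a^2 = \disc(\OO) = D\id a'^2$, hence $\id a^2 = \id a'^2$ in $\IF$. Since $\IF$ is free abelian on the prime ideals, square roots are unambiguous and $\id a = \id a'$.

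The work is therefore in existence, and here I would exploit the $\OO[F]$-module structure of $\OO$. The quotient $\OO/\OO[F]$ is a finitely generated torsion-free $\OO[F]$-module of rank one (torsion-freeness because $c\,\theta \in F$ with $c \in \OO[F]\setminus\{0\}$ forces $\theta \in F\cap\OO = \OO[F]$), hence projective; so the sequence $0 \to \OO[F] \to \OO \to \OO/\OO[F] \to 0$ splits and $\OO = \OO[F]\oplus M$ for a rank-one projective submodule $M \subseteq \OO \subseteq K$. Any such $M$ has the form $M = \id b\,\theta$ for some $\id b \in \IF$ and some $\theta \in M$: picking $0\neq\theta\in M$, the module $M\theta^{-1}$ sits inside $F$ and is thus a fractional ideal $\id b$. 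This yields $\OO = \OO[F]\oplus\id b\,\theta$ with $\theta$ integral and $\theta\notin F$, so $\disc(\theta)\in\OO[F]$ is nonzero and $F(\theta)=K$.

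It then remains to normalize the discriminant of the generator. Since $F(\sqrt{\disc(\theta)}) = F(\theta) = K = F(\sqrt{-D})$, the two defining elements differ by a square: $\disc(\theta) = -D\,s^2$ for some $s\in F^\times$. Putting $\omega = \theta/s$ gives $\disc(\omega) = -D$, while $\id b\,\theta = (s\,\id b)\,\omega$, so with $\id a := s\,\id b \in \IF$ we get $\OO = \OO[F]\oplus\id a\,\omega$. As this module equals the order $\OO$, the pair $(-D,\id a)$ is a discriminant and $\OO[D,\id a] = \OO$, which is what was required.

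I expect no deep obstacle; the one point deserving care is the passage from the abstract splitting $\OO\cong\OO[F]\oplus\id b$ to a decomposition $\OO = \OO[F]\oplus\id b\,\theta$ realized concretely inside $K$, together with the bookkeeping that lets the normalizing scalar $s$ be absorbed into the ideal. The latter is harmless precisely because $\id a$ is allowed to range over all of $\IF$, so the resulting $\id a = s\,\id b$ need not be integral. Beyond this, everything reduces to unique factorization of fractional ideals and the already-established principle that an order in $K$ is determined by its discriminant.
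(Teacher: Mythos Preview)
Your argument is correct and follows essentially the same line as the paper: both proofs use that $\OO[F]\hookrightarrow\OO$ splits as $\OO[F]$-modules (you phrase this via projectivity of the rank-one quotient, the paper via an explicit retraction $r$), then produce a generator of the complementary rank-one piece with discriminant exactly $-D$. The only cosmetic difference is in the normalization step: the paper starts with some $\omega'$ already satisfying $\disc(\omega')=-D$ and \emph{translates} it into $\ker r$ (which preserves the discriminant), whereas you start with an arbitrary $\theta$ in the complement and \emph{scale} it by $s\in F^\times$ to force $\disc(\theta/s)=-D$, absorbing $s$ into the ideal; both yield the same $\id a$ and the uniqueness argument via $D\id a^2=\disc(\OO)$ is identical.
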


\begin{proof}
 
Let $r$ be an $\OO[F]$-linear retraction for the embedding $\OO[F] 
\hookrightarrow \OO$, which we extend to a $F$-linear map $r:K \to F$. Let 
$\omega' \in K$ be any element satisfying $\disc(\omega') = -D$, and let 
$\omega =  \omega' - r(\omega')$. Then, since $\omega \notin F$, we have that 
$\ker r = F\,\omega$. Hence letting $\id a = (\OO\,\omega^{-1}) \cap F$ we have 
that $\OO = \OO[F] \oplus \id a\,\omega$. Note that the ideal $\id a$ is uniquely 
determined since $D\id a^2$ is the discriminant of $\OO$.
\end{proof}

\begin{prop}\label{prop:p*disc}
 
Let $\id p$ be a prime ideal. If $(-D,\id a)$ is a discriminant then $(-D,\id p 
\id a)$ is a discriminant, and the converse is true if $\id p \nmid 2$ and $D 
\in \id a^{-2}$.
 
\end{prop}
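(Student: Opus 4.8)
The plan is to reduce the statement to elementary local conditions on the trace and norm of a generator. First I would record that $\OO[F]\oplus\id a\,\omega$ is an order if and only if it is closed under multiplication, which, writing $t=\trace(\omega)$, $n=\norm(\omega)$ and using $\omega^2=t\,\omega-n$, happens exactly when $\id a\,t\subseteq\OO[F]$ and $\id a^2\,n\subseteq\OO[F]$. Thus $(-D,\id a)$ is a discriminant precisely when there is some $\omega\in K$ with $\disc(\omega)=-D$ satisfying these two containments. I would also note the translation invariance $\disc(\omega+c)=\disc(\omega)$ for all $c\in F$, which follows from a direct computation with $\trace$ and $\norm$; this gives the freedom to modify $\omega$ by an element of $F$ without changing its discriminant.

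The forward implication is then immediate: if $\omega$ witnesses that $(-D,\id a)$ is a discriminant, then $\id p\id a\,t\subseteq\id p\subseteq\OO[F]$ and $(\id p\id a)^2\,n=\id p^2\id a^2\,n\subseteq\id p^2\subseteq\OO[F]$, so the same $\omega$ witnesses that $(-D,\id p\id a)$ is a discriminant (one is simply passing to the suborder of conductor $\id p$).

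For the converse, let $\omega$ witness that $(-D,\id p\id a)$ is a discriminant. At every prime $\id q\neq\id p$ one has $v_{\id q}(\id p\id a)=v_{\id q}(\id a)$, so the two containments already give $v_{\id q}(t)\geq -v_{\id q}(\id a)$ and $v_{\id q}(n)\geq -2v_{\id q}(\id a)$; thus $\omega$ satisfies the $(-D,\id a)$ conditions away from $\id p$. The only obstruction is at $\id p$, and there I would exploit the hypotheses by completing the square. Since $\id p\nmid 2$, the element $2$ is a unit at $\id p$, so I can replace $\omega$ by $\omega'=\omega+c$ with $c\in F$ chosen so that $v\p(\trace(\omega'))=v\p(t+2c)$ is as large as desired; using $\norm(\omega')=(\trace(\omega')^2+D)/4$ together with $D\in\id a^{-2}$, i.e. $v\p(D)\geq -2v\p(\id a)$, this forces $v\p(\norm(\omega'))\geq -2v\p(\id a)$ as well, so the $(-D,\id a)$ conditions hold at $\id p$ too.

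The one genuine technical point, and the step I expect to require the most care, is globalizing this local adjustment: I need a single $c\in F$ that completes the square to high $\id p$-adic precision while satisfying $v_{\id q}(c)\geq -v_{\id q}(\id a)$ at all $\id q\neq\id p$, so that the modification does not spoil the containments already established away from $\id p$. Such a $c$ exists by approximation in $F$, since the conditions involve only finitely many primes together with an integrality constraint in $\id a^{-1}$ elsewhere. With this $c$ in hand, $\omega'$ satisfies both containments at every prime, so $\OO[F]\oplus\id a\,\omega'$ is an order and $(-D,\id a)$ is a discriminant. It is worth verifying along the way that both hypotheses enter exactly at $\id p$: $\id p\nmid 2$ to complete the square and to keep $v\p(4)=0$, and $D\in\id a^{-2}$ to control the norm once the trace has been made small.
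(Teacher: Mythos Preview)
Your argument is correct and follows the same overall strategy as the paper: translate $\omega$ by an element of $F$ to fix the trace condition at $\id p$, then deduce the norm condition from the identity $4\norm(\omega')=\trace(\omega')^2+D$ using $D\in\id a^{-2}$ and $\id p\nmid 2$.

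The one place where the paper is cleaner is the choice of shift. Instead of appealing to approximation to produce a global $c$, the paper takes $\xi\in\OO[F]$ with $1-2\xi\in\id p$ (which exists precisely because $\id p\nmid 2$) and replaces $\omega$ by $\omega-\xi\,\trace(\omega)$. The new trace is $(1-2\xi)\,\trace(\omega)\in\id p\cdot(\id p\id a)^{-1}=\id a^{-1}$, and since $\xi\,\trace(\omega)\in(\id p\id a)^{-1}$ the shift automatically lies in $\id a^{-1}$ at every $\id q\neq\id p$, so nothing is spoiled away from $\id p$. This single explicit choice replaces your approximation step entirely; the rest of the argument (bounding the norm via $4\norm(\omega)=\trace(\omega)^2+D$, then using $\norm(\omega)\in(\id p\id a)^{-2}$ at primes above $2$) is the same as yours.
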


\begin{proof}
 
The first statement is trivial. To prove the converse, assume that $\OO[F] 
\oplus \id p\id a\,\omega$ is an order in $K$, with $\disc(\omega) = -D$. In 
particular, we have that $\trace(\omega) \in (\id p \id a)^{-1}$ and 
$\norm(\omega) \in (\id p \id a)^{-2}$. Since $\id p \nmid 2$, there 
exists $\xi \in \OO[F]$ such that $1-2\xi \in \id p$. Then, changing $\omega$ by 
$\omega - \xi \trace(\omega)$, we may assume that $\trace(\omega) \in 
\id a^{-1}$.

By hypothesis we have that $\disc(\omega) = \trace(\omega)^2 - 4 
\norm(\omega) \in \id a^{-2}$. In particular, since $\trace(\omega) 
\in \id a^{-1}$ we have that $4 \norm(\omega) \in \id a^{-2}$. Since $\id 
p \nmid 2$ and $\norm(\omega) \in (\id p \id a)^{-2}$ we have that 
$\norm(\omega) \in \id a^{-2}$, which allows us to conclude that $\OO[F] 
\oplus \id a\,\omega$ is an order in $K$.
\end{proof}

Let $(-D,\id a)$ be a discriminant, and let $\widetilde X_{D,\id a} = \{x\in 
\hatx B\,:\, \OO[D,\id a] \subseteq R_x\}$. We define a set 
$X_{D,\id a}$ of \emph{special points} associated to the discriminant $(-D,\id a)$ by
\[
 X_{D, \id a} = \hatx R\backslash\widetilde X_{D, \id a} /K^\times.
\]
If $(-D,\id a)$ is not a discriminant, we let $X_{D,\id a}=\varnothing$.
Let
\begin{equation}\notag
 \qquad
 \eta_{D, \id a} = \sum_{x \in X_{D, \id a}} 
 \frac{1}{[\OOx[x]:\OOx[F]]}\:
 \varphi_{x,P_D} \qquad \in \mathcal{M}_\mathbf{k}(R)\,,
\end{equation}
where $\OO[x] = R_x \cap K$. This is well defined by \eqref{eqn:transf_phi2} and 
\eqref{eqn:P_D-Kinv}. When $(-D,\id a)$ is fundamental then
\begin{equation}\label{eqn:eta_fund}
\eta_{D,\id a} = \tfrac1\tK\sum_{x \in X_{D, \id a}} \varphi_{x,P_D},
\end{equation}
because in this case $\OO[x]=\OO[K]$ for every $x\in X_{D,\id a}$.
It can be proved that $\eta_{D, \id a}$ does not depend on the choice of the 
embedding $K \hookrightarrow B$. When there does not exist such an embedding, we
let $\eta_{D,\id a}=0$.

\begin{prop}\label{prop:coef_serie_theta}
Let $\varphi\in \mathcal M_\mathbf{k}(R)$. Let $D\in F^+$ and let $\id a \in 
\IF$. Then the $D$-th Fourier coefficient of $\vartheta(\varphi)$ at 
the cusp $\id a$ is given by
\begin{equation}\label{eqn:fourier=height}
 \lambda(D,\id a;\vartheta(\varphi)) =
 \frac{1}{\idn a}\,\ll{\varphi, \eta_{D,\id a}}\,.
\end{equation}
\end{prop}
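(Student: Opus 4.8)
The plan is to expand both sides into weighted sums indexed by special points and to match them term by term through a double-coset bijection.

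First I would reduce the left-hand side to an explicit sum over special points. Writing $\varphi = \sum_{x\in\cl(R)}\tfrac1{w_x}\varphi_{x,\varphi(x)}$ as in \eqref{eqn:phi=sum_phix} and using that $\vartheta(\varphi_{x,\varphi(x)}) = \vartheta_{x,\varphi(x)}$, the Fourier coefficient formula \eqref{eqn:fourier_coeffs} gives
\[
\idn a\,\lambda(D,\id a;\vartheta(\varphi)) = \sum_{x\in\cl(R)}\tfrac1{w_x}\sum_{y\in\mathcal{A}_{D,\id a}(x)}\varphi(x)(y)\,,
\]
where $\varphi(x)(y)$ denotes the value at $y$ of the harmonic polynomial $\varphi(x)\in V_\mathbf{k}$. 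If $(-D,\id a)$ is not a discriminant, or if $K$ does not embed into $B$, then every $\mathcal{A}_{D,\id a}(x)$ is empty (an element of discriminant $-D$ in $\id a^{-1}L_x$ produces an order $\OO[F]\oplus\id a\,y$ of discriminant $D\id a^2$ and an embedding $K\hookrightarrow B$) and $\eta_{D,\id a}=0$, so both sides vanish; thus I may assume that the embedding $K\hookrightarrow B$ is the fixed one and that $(-D,\id a)$ is a discriminant.

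Second I would compute the right-hand side. By the definition of $\eta_{D,\id a}$ we have $\ll{\varphi,\eta_{D,\id a}} = \sum_{x'\in X_{D,\id a}}\tfrac{1}{[\OOx[x']:\OOx[F]]}\ll{\varphi,\varphi_{x',P_D}}$. Evaluating the height pairing at the class of $x'$, on which $\varphi_{x',P_D}$ is supported, gives $\ll{\varphi,\varphi_{x',P_D}} = \tfrac1{w_{x'}}\sum_{\gamma\in\Gamma_{\!x'}}\ll{\varphi(x')\cdot\gamma^{-1},P_D}$ after using unitarity; since $\varphi(x')\in V_\mathbf{k}^{\Gamma_{\!x'}}$ the average over $\Gamma_{\!x'}$ collapses, and \eqref{eqn:gegenbauer} yields $\ll{\varphi,\varphi_{x',P_D}} = \ll{\varphi(x'),P_D} = \varphi(x')(\omega)$. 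Hence
\[
\ll{\varphi,\eta_{D,\id a}} = \sum_{x'\in X_{D,\id a}}\tfrac{1}{[\OOx[x']:\OOx[F]]}\,\varphi(x')(\omega)\,.
\]

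It then remains to identify the two displayed sums. I would group the inner sum on the left into $\Gamma_{\!x}$-orbits and realize it as a sum over the double-coset set $\hatx R\backslash\{(x,y)\,:\,y\in\mathcal{A}_{D,\id a}(x)\}/B^\times$, where $B^\times$ acts by $(x,y)\cdot\gamma=(x\gamma,\gamma^{-1}y\gamma)$ and on which $(x,y)\mapsto\varphi(x)(y)$ is constant by the equivariance of $\varphi$ and of $\rho_\mathbf{k}$. For $y$ with $\disc(y)=-D$, Skolem--Noether provides $\gamma\in B^\times$ conjugating $y$ to $\pm\omega$; I would send $(x,y)$ to $x' = x\gamma^{-1}$, checking that $x'\in\widetilde X_{D,\id a}$ (since $\OO[D,\id a] = \OO[F]\oplus\id a\,\omega \subseteq R_{x'}=\gamma R_x\gamma^{-1}$ is equivalent to $y\in\id a^{-1}L_x$) and that $x'$ is well defined modulo $K^\times$, because $\gamma$ is unique up to left multiplication by the centralizer $K^\times$ of $\omega$. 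The conjugation-equivariance together with \eqref{eqn:gegenbauer} turns $\varphi(x)(y)$ into $\varphi(x')(\omega)$, and Hypothesis~\ref{hyp:peso}, via $P_D(-\omega)=P_D(\omega)$, makes the choice of sign in $\pm\omega$ irrelevant.

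The main obstacle is the weight bookkeeping and the verification that this map is a bijection of double cosets. Rewriting the left-hand side over $\Gamma_{\!x}$-orbits, the factor $\tfrac1{w_x}$ becomes $1/\vert\Stab_{\Gamma_{\!x}}(y)\vert$, and $\Stab_{\Gamma_{\!x}}(y) = (R_x\cap F[y])^\times/\OOx[F]$; conjugating by $\gamma$ identifies $R_x\cap F[y]$ with $\OO[x'] = R_{x'}\cap K$, so this factor equals $1/[\OOx[x']:\OOx[F]]$, matching the weight appearing in $\eta_{D,\id a}$. Verifying that the assignment descends to the double cosets, is bijective with inverse $x'\mapsto(x',\omega)$, and respects these stabilizers, so that the two weighted sums agree term by term, is the delicate part of the argument.
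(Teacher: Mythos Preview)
Your proposal is correct and follows essentially the same route as the paper's proof: both arguments use \eqref{eqn:fourier_coeffs} to express the Fourier coefficient as a sum over $\mathcal{A}_{D,\id a}(x)$, pass to $\Gamma_{\!x}$-orbits, conjugate each $y$ to $\omega$ via Skolem--Noether to land in $X_{D,\id a}$, identify the stabilizer with $\OOx[x']/\OOx[F]$, and use \eqref{eqn:gegenbauer} to turn the polynomial value into a height pairing. The only organizational difference is that the paper first reduces by linearity to $\varphi=\varphi_{x,P}$ with $P\in V_\mathbf{k}^{\Gamma_{\!x}}$ and then exhibits an \emph{injection} $\Gamma_{\!x}\backslash\mathcal{A}_{D,\id a}(x)\hookrightarrow X_{D,\id a}$, noting that the remaining terms of the $\eta_{D,\id a}$-sum pair trivially with $\varphi_{x,P}$; you instead keep the full sum over $\cl(R)$ and assemble these injections into a single bijection $\coprod_{x\in\cl(R)}\Gamma_{\!x}\backslash\mathcal{A}_{D,\id a}(x)\to X_{D,\id a}$, which is just the global repackaging of the same computation.
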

\begin{proof}
By \eqref{eqn:phi=sum_phix} we can assume that $\varphi = \varphi_{x,P}$ with
$P\in V_\mathbf{k}^{\Gamma_{\!x}}$, so that $\vartheta(\varphi) = 
\vartheta_{x,P}$. If $K$ does not embed into $B$, then $\mathcal{A}_{D,\id 
a}(x)=\varnothing$ and both sides of \eqref{eqn:fourier=height} vanish.

Fix $\omega \in K$ with $\disc(\omega) = -D$, and let $\Gamma_{\!x}$ act on 
$\mathcal{A}_{D,\id a}(x)$ by conjugation.
Given $y \in \mathcal{A}_{D,\id a}(x)$, since $\disc(y)=\disc(\omega)$,
we can assume there exists $\gamma\in B^\times$ such 
that $y=\gamma\omega\gamma^{-1}$. In particular $\OO[D,\id a] = \OO[F] \oplus 
\id a\,\omega$. The map $y\mapsto x\gamma$ induces an injection
\[
    \Gamma_{\!x} \backslash \mathcal{A}_{D,\id a}(x)
    \longrightarrow
    X_{D,\id a}\,.
\]
Note that
$\Stab_{\,\Gamma_{\!x}} y = (R_x\cap F(y))/\OOx[F]
\simeq
( R_{x\gamma}\cap K)/\OOx[F]=
\OOx[x\gamma]/\OOx[F]$.
Note also that
$P(y)=\ll{P\cdot\gamma,P_D}=\frac1{w_x}
\ll{\varphi_{x,P}, \varphi_{x\gamma,P_D}}$,
using that $P$ is fixed by $\Gamma_x$.
Hence
\begin{align*}
\idn a \,\lambda(D,\id a;\vartheta_{x,P})
&= 
\sum_{y\in\mathcal{A}_{D,\id a}(x)} P(y)
=
\sum_{y\in\Gamma_{\!x}\backslash\mathcal{A}_{D,\id a}(x)}
[\Gamma_{\!x}:\operatorname{Stab}_{\,\Gamma_{\!x}}y]\,
P(y)
\\&=
\sum_{x\gamma\in X_{D,\id a}}
\frac{w_x}{[\OOx[x\gamma]:\OOx[F]]}\,
\ll{P\cdot\gamma,P_D}
\\&=
\sum_{z\in X_{D,\id a}}
\frac{1}{[\OOx[z]:\OOx[F]]}\,
\ll{\varphi_{x,P}, \varphi_{z,P_D}}
=
\ll{\varphi_{x,P},\eta_{D,\id a}} \,.
\end{align*}
Note that in the last sum $\ll{\varphi_{x,P}, \varphi_{z,P_D}}=0$
unless $z=x\gamma$.
\end{proof}

By analogy with the case $F = \Q$ (see \cite{kohnen-newforms}), we consider the 
\emph{plus subspace} of $\mathcal{M}_\mathbf{3/2+k}(4\id{N}, \chi_1)$ which, 
under Hypothesis \ref{hyp:peso}, is given by
\begin{multline*}
 \mathcal{M}^+_\mathbf{3/2+k}(4\id{N}, \chi_1) = \bigl\{f \in 
\mathcal{M}_\mathbf{3/2+k}(4\id{N}, \chi_1)\,:\, \\ \lambda(D,\id a;f) = 0 
\text{ 
unless } (-D,\id a) \text{ is a discriminant}\bigr\}\,,
\end{multline*}
and we let $\mathcal{S}^+_\mathbf{3/2+k}(4\id{N}, \chi_1) = 
\mathcal{M}^+_\mathbf{3/2+k}(4\id{N}, \chi_1) \cap 
\mathcal{S}_\mathbf{3/2+k}(4\id{N}, \chi_1)$. Using the formula for the action 
of the Hecke operators in terms of Fourier coefficients (see \cite[Proposition 
5.4]{shim-hh}) together with Proposition \ref{prop:p*disc} it is easy to prove 
that $\mathcal{M}^+_\mathbf{3/2+k}(4\id{N}, \chi_1)$ is stable
by the Hecke operators $T\p$ with $\id p \nmid 2$.

\begin{coro}
 
The Hecke-linear map $\vartheta$ sends $\mathcal M_\mathbf{k}(R)$ into 
$\mathcal{M}^+_\mathbf{3/2+k}(4\id{N},\chi_1)$.
 
\end{coro}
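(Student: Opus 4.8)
The plan is to deduce this corollary directly from the Fourier coefficient formula of Proposition~\ref{prop:coef_serie_theta}. Hecke-linearity of $\vartheta$ has already been established, and the preceding proposition shows that $\vartheta$ maps $\mathcal{M}_\mathbf{k}(R)$ into $\mathcal{M}_\mathbf{3/2+k}(4\id{N},\chi_1)$; so the only thing left to verify is that the image satisfies the defining condition of the plus subspace, namely that for every $\varphi \in \mathcal{M}_\mathbf{k}(R)$ one has $\lambda(D,\id a;\vartheta(\varphi)) = 0$ whenever $(-D,\id a)$ is not a discriminant.

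First I would fix a totally positive $D \in F^+$ and an ideal $\id a \in \IF$ such that $(-D,\id a)$ is not a discriminant. Proposition~\ref{prop:coef_serie_theta} gives $\lambda(D,\id a;\vartheta(\varphi)) = \tfrac{1}{\idn a}\,\ll{\varphi,\eta_{D,\id a}}$, so it suffices to show that the special-point vector $\eta_{D,\id a}$ vanishes. This is immediate from its definition: $\eta_{D,\id a}$ is a sum indexed by the set of special points $X_{D,\id a}$, and by construction $X_{D,\id a} = \varnothing$ exactly when $(-D,\id a)$ fails to be a discriminant (and $\eta_{D,\id a}=0$ as well when $K$ does not embed in $B$). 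An empty index set makes $\eta_{D,\id a} = 0$, whence $\ll{\varphi,\eta_{D,\id a}} = 0$ and the coefficient vanishes. Since $\varphi$ is arbitrary, this yields $\vartheta(\mathcal{M}_\mathbf{k}(R)) \subseteq \mathcal{M}^+_\mathbf{3/2+k}(4\id{N},\chi_1)$.

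I do not expect a genuine obstacle here: the statement is essentially a bookkeeping consequence of Proposition~\ref{prop:coef_serie_theta} together with the conventions fixing $\eta_{D,\id a}$. The only point I would pause on is the range of indices over which the plus-space condition must be checked. Coefficients $\lambda(D,\id a;\vartheta(\varphi))$ with $D \notin F^+\cup\{0\}$ vanish automatically, since $-\disc$ is totally positive definite and hence $\mathcal{A}_{D,\id a}(x) = \varnothing$ there; the totally positive indices are precisely those covered by Proposition~\ref{prop:coef_serie_theta}, so the argument above applies verbatim. (If $\mathbf{k} \neq \mathbf{0}$ the $D=0$ term vanishes as well, because $\varphi(x)$ is a homogeneous polynomial of positive degree, so no separate discussion of the constant term is needed.)
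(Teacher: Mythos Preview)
Your proposal is correct and is exactly the argument the paper intends: the corollary is stated without proof because it follows immediately from Proposition~\ref{prop:coef_serie_theta} together with the convention that $X_{D,\id a}=\varnothing$ (hence $\eta_{D,\id a}=0$) when $(-D,\id a)$ is not a discriminant. Your handling of the edge cases ($D\notin F^+$, and $D=0$ for $\mathbf{k}\neq\mathbf{0}$) is more careful than the paper itself, which simply leaves the corollary unproved.
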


\section{Height and geometric pairings}\label{sect:height_geom}

We start this section by comparing the geometric pairing on CM-cycles of 
\cite{zhang-gl2} (see \cite{Xue-Rankin} for the case of higher 
weight) with the height pairing introduced in Section 
\ref{sect:quat_forms}.

Let $K/F$ be a totally imaginary quadratic extension.
As in Section~\ref{sect:half_int} we assume that
there exists an embedding $K\hookrightarrow B$, which we fix.
Furthermore, we assume $\OO[K]\subseteq R$.
Let $\sC = (\hatx B / \hatx F) / (K^\times / 
F^\times)$, and let $\pi:\hatx B / \hatx F\to\sC$ be the projection 
map.
We fix a Haar measure $\mu$ on $\hatx B / \hatx F$.
On $K^\times / F^\times$ we consider the discrete measure, and we let $\bar\mu$ 
be the quotient measure on $\sC$.
We write
 $\muR = \mu\bigl(\hatx R/\hatOOx[F]\bigr)$.

We consider the space $\DC$ of 
\emph{CM-cycles} on $\sC$. These are locally constant
functions on $\sC$ with compact support.
This space comes equipped with the action of 
Hecke operators $\Tm$
given by
\begin{equation}\label{eqn:heckeC}
  \Tm\alpha(x) =
  \tfrac1{\muR}\,\int_{H_\id{m}/\hatOOx[F]}
  \alpha(hx)\,dh.
\end{equation}

Given $v\in V$ which is fixed by $K^\times/F^\times$,
we let $M_v:B^\times/F^\times\to\C$ be the matrix coefficient given by 
$\gamma\mapsto\ll{v\cdot\gamma, v}$.
Then $M_v$ is bi-$K^\times/F^\times$-invariant and satisfies that
$\overline{M_v(\gamma)} = M_v(\gamma^{-1})$.
We call $M_v$ a \emph{multiplicity function}. 
We let $k_v : \mathcal{C}\times 
\mathcal{C} \to \C$ be the map given by
\[
 k_v(x,y) = 
\sum\nolimits_{\gamma\in\Gamma'_{\!x,y}} M_v(\gamma),
\]
where for $x,y \in \hatx B$ we denote
$\Gamma'_{\!x, y} = (B^\times \cap x ^{-1} \hatx F \hatx R y) / F^\times$.
We consider the \emph{geometric pairing} 
on $\DC$ induced by $M_v$, which for $\alpha, \beta \in \DC$ that are left 
invariant by $\hatx R/\hatOOx[F]$ is given by
\begin{equation}\label{eqn:geom_pairing_def}
\ll{\alpha, \beta}_v =
\tfrac1{\muR}
\iint_{\sC\times\sC}
\alpha(x)\,\overline{\beta(y)}
\,
k_v(x,y)
\,dx\,dy\,.
\end{equation}

\begin{lemma}

Let $x,y\in \hatx B$. The natural map $\Gamma_{x,y} \to \Gamma'_{x,y}$ is 
injective, and
 \[
  \Gamma'_{x,y} = \coprod_{\xi\in \cl(F)} \Gamma_{\xi x,y}.
 \]

\end{lemma}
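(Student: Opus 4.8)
The plan is to reduce both assertions to the single arithmetic fact that $\hatx F \cap \hatx R = \hatOOx[F]$ inside $\hatx B$, and consequently $F^\times \cap \hatx R = \OOx[F]$. Indeed, at each finite prime $\id p$ the intersection $F\p^\times \cap R\p^\times$ consists of the units of $F\p$ that are integral in $R\p$, which are exactly $\mathcal{O}_{F\p}^\times$; and a global element of $F^\times$ that is a unit at every finite place is a global unit. I would isolate this as the one place where the order structure of $R$ enters.

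For injectivity, I would take $\gamma,\gamma' \in B^\times \cap x^{-1}\hatx R y$ with the same image in $\Gamma'_{x,y}$, so that $\gamma = a\gamma'$ for some $a \in F^\times$. Writing $\gamma = x^{-1}uy$ and $\gamma' = x^{-1}u'y$ with $u,u' \in \hatx R$ and using that $a$ is central, one gets $u = au'$, hence $a = u(u')^{-1} \in \hatx R \cap \hatx F = \hatOOx[F]$; combined with $a \in F^\times$ this forces $a \in \OOx[F]$, so $\gamma$ and $\gamma'$ already coincide in $\Gamma_{x,y}$.

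For the decomposition I would choose idele representatives $s_\xi \in \hatx F$ for the classes $\xi \in \cl(F) = \hatx F/F^\times\hatOOx[F]$, so that (moving the central $s_\xi$ past $x$, and replacing $s_\xi$ by $s_\xi^{-1}$ if needed to match the direction of the $\cl(F)$-action, which only permutes the index set) $\Gamma_{\xi x, y}$ is identified with $(B^\times \cap x^{-1}s_\xi\hatx R y)/\OOx[F]$. Since $\hatOOx[F] \subseteq \hatx R$ and $F^\times$ is central, the subgroup $F^\times\hatx R \subseteq \hatx B$ satisfies $(F^\times\hatx R)\cap\hatx F = F^\times\hatOOx[F]$, whence the left cosets $s_\xi(F^\times\hatx R)$ are pairwise distinct and $\hatx F\hatx R = \coprod_\xi s_\xi F^\times\hatx R$. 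Translating by $x^{-1}$ on the left and $y$ on the right and intersecting with $B^\times$ then yields the disjoint decomposition $B^\times\cap x^{-1}\hatx F\hatx R y = \coprod_\xi \bigl(B^\times\cap F^\times x^{-1}s_\xi\hatx R y\bigr)$.

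It remains to identify the $\xi$-th piece with $\Gamma_{\xi x, y}$ after passing to the quotient by $F^\times$. Here I would note that $B^\times\cap F^\times x^{-1}s_\xi\hatx R y = F^\times\bigl(B^\times\cap x^{-1}s_\xi\hatx R y\bigr)$, since if $\beta \in B^\times$ equals $a\cdot x^{-1}s_\xi u y$ with $a\in F^\times$ and $u \in \hatx R$, then $a^{-1}\beta$ lies in $B^\times\cap x^{-1}s_\xi\hatx R y$. Passing to the quotient by $F^\times$, the natural map $\Gamma_{\xi x, y} \to \Gamma'_{x,y}$ is a bijection onto this piece, the injectivity being exactly the argument of the second paragraph applied with $s_\xi\hatx R$ in place of $\hatx R$. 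Summing over $\xi$ gives the asserted disjoint union, and the case $\xi = 1$ recovers the injectivity of $\Gamma_{x,y}\to\Gamma'_{x,y}$. The only real subtlety, and the step I would treat most carefully, is the bookkeeping ensuring that the cosets are genuinely disjoint and indexed bijectively by $\cl(F)$ — precisely where the identity $\hatx F\cap\hatx R = \hatOOx[F]$ is used.
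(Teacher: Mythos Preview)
Your proposal is correct and follows essentially the same approach as the paper: both arguments reduce everything to the single fact $\hatx F \cap \hatx R = \hatOOx[F]$ (and its consequence $F^\times \cap \hatx R = \OOx[F]$), using it once for injectivity and once for disjointness of the $\cl(F)$-indexed pieces. The paper's version is terser---it asserts surjectivity of the union as ``clear'' and checks disjointness by a direct element computation---whereas you phrase the same content via the coset decomposition $\hatx F\hatx R = \coprod_\xi s_\xi F^\times\hatx R$, but the underlying idea is identical.
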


\begin{proof}

Let $u,v \in \hatx R$ be such that there exists $\eta \in F^\times$ with 
$x^{-1} u y \eta= x^{-1} v y$. Then $\eta = u^{-1} v \in F^\times \cap \hatx R 
= \OOx[F]$. This proves the first statement.
 
It is clear that the union gives all of $\Gamma'_{x,y}$. To see that it is 
disjoint, suppose that $\xi, \zeta\in \hatx F$ are such that there exist 
$u,v\in \hatx R$ and $\eta \in F^\times$ with $x^{-1} \xi u y \eta= x^{-1} 
\zeta v y$. Then $\xi\zeta^{-1} \eta = v u^{-1} \in \hatx F \cap \hatx R = 
\hatOOx[F]$, and hence $\xi = \zeta$ in $\cl(F)$. 
\end{proof}

The following result is immediate from this lemma and 
Proposition~\ref{prop:height_on_phi}.

\begin{prop}\label{prop:M_vs_altura}
Let $x,y\in \hatx B$. Then $k_v(x,y) = \sum_{\xi\in\cl(F)} \ll{\varphi_{\xi 
x,v}\,, \varphi_{y,v}}$.
\end{prop}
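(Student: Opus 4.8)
The plan is to unwind both sides directly and match them term by term using the preceding lemma and Proposition~\ref{prop:height_on_phi}. Starting from the right-hand side, I would apply Proposition~\ref{prop:height_on_phi} to each summand, obtaining
\[
\ll{\varphi_{\xi x,v},\varphi_{y,v}}
=\sum_{\gamma\in\Gamma_{\!\xi x,y}}\ll{v\cdot\gamma,v}
=\sum_{\gamma\in\Gamma_{\!\xi x,y}}M_v(\gamma),
\]
where the last equality is just the definition of the multiplicity function $M_v$. Summing over $\xi\in\cl(F)$ then gives
\[
\sum_{\xi\in\cl(F)}\ll{\varphi_{\xi x,v},\varphi_{y,v}}
=\sum_{\xi\in\cl(F)}\;\sum_{\gamma\in\Gamma_{\!\xi x,y}}M_v(\gamma).
\]

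The key step is to recognize the double sum as a single sum over the disjoint union provided by the lemma. The lemma states precisely that $\Gamma'_{\!x,y}=\coprod_{\xi\in\cl(F)}\Gamma_{\!\xi x,y}$, so the indices $(\xi,\gamma)$ with $\xi\in\cl(F)$ and $\gamma\in\Gamma_{\!\xi x,y}$ are in bijection with the elements of $\Gamma'_{\!x,y}$. Under this identification the double sum collapses to $\sum_{\gamma\in\Gamma'_{\!x,y}}M_v(\gamma)$, which is exactly $k_v(x,y)$ by definition. This yields the desired equality.

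I expect essentially no obstacle here: the statement is flagged as ``immediate'' from the lemma and Proposition~\ref{prop:height_on_phi}, and indeed the entire content is the bookkeeping that turns the coproduct decomposition of $\Gamma'_{\!x,y}$ into a regrouping of the sum defining $k_v$. The only point requiring a moment's care is that $M_v$ is well defined on each $\Gamma_{\!\xi x,y}$ and on $\Gamma'_{\!x,y}$ — that is, that the matrix coefficient $\gamma\mapsto\ll{v\cdot\gamma,v}$ descends correctly to the relevant quotients by $\OOx[F]$ and $F^\times$. This holds because $v$ is fixed by $K^\times/F^\times$ and $\rho$ is a representation of $B^\times/F^\times$, so scalars act trivially and $M_v$ is genuinely a function on $B^\times/F^\times$ as claimed. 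Once this compatibility is noted, the chain of equalities above completes the proof.
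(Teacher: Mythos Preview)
Your proposal is correct and is exactly the argument the paper has in mind: the paper simply states that the result is ``immediate from this lemma and Proposition~\ref{prop:height_on_phi}'' without writing out a proof, and your unfolding is precisely that immediate deduction. Your remark about $M_v$ descending to the relevant quotients is the only point needing a word, and you have handled it correctly.
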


Given $a \in \hatx K$, we let
$\alpha_a \in \DC$ be the 
characteristic function of $\pi\bigl(\hatx Ra\bigr) \subseteq \sC$.
Since $\OO[K]\subseteq 
R$, the CM-cycle $\alpha_a$ depends only on the element in $\cl(K)$
determined by $a$.
The same holds for the quaternionic modular form $\varphi_{a,v}$
by \eqref{eqn:transf_phi2}.

\begin{prop}\label{prop:geom_vs_height}

Let $\id{m}\subseteq\OO[F]$ be an ideal. For $a, b \in \cl(K)$ we have that
\[
 \frac{\ll{\Tm\alpha_a, \alpha_b}_v}{\muR}
 =
 \tfrac1{\tK^{\;2}}\,
 \sum_{\xi\in\cl(F)}\ll{\Tm\varphi_{\xi a,v}, \varphi_{b,v}}\,.
\]
\end{prop}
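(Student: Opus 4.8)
The plan is to unfold the definition of the geometric pairing, replace the kernel $k_v$ by the height pairing using Proposition~\ref{prop:M_vs_altura}, and then carry out the two integrations over $\sC$ against the characteristic functions $\alpha_a$ and $\alpha_b$. Writing out \eqref{eqn:geom_pairing_def} and inserting $k_v(x,y)=\sum_{\xi\in\cl(F)}\ll{\varphi_{\xi x,v},\varphi_{y,v}}$ gives
\[
 \ll{\Tm\alpha_a,\alpha_b}_v = \frac{1}{\muR}\sum_{\xi\in\cl(F)}\iint_{\sC\times\sC}(\Tm\alpha_a)(x)\,\overline{\alpha_b(y)}\,\ll{\varphi_{\xi x,v},\varphi_{y,v}}\,dx\,dy .
\]
The integration in $y$ is immediate: since $v$ is fixed by $K^\times/F^\times$, the form $\varphi_{y,v}$ is left $\hatx R$-invariant and right $K^\times$-invariant in $y$, hence constant and equal to $\varphi_{b,v}$ on $\operatorname{supp}\alpha_b=\pi(\hatx R b)$. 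So the $y$-integral contributes a factor $\bar\mu(\pi(\hatx R b))\,\ll{\varphi_{\xi x,v},\varphi_{b,v}}$, and we are left with $\int_\sC(\Tm\alpha_a)(x)\,\Psi_\xi(x)\,dx$, where $\Psi_\xi(x)=\ll{\varphi_{\xi x,v},\varphi_{b,v}}$ descends to a function on $\hatx R\backslash\sC$.

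To deal with the remaining Hecke operator I would move it onto the test function by taking the adjoint of $\Tm$ for the pairing $\int_\sC\,d\bar\mu$. Using Fubini together with the left invariance of $\bar\mu$ under translation by $h\in\hatx B$, one gets
\[
 \int_\sC(\Tm\alpha_a)(x)\,\Psi_\xi(x)\,dx = \int_\sC\alpha_a(x)\,\Psi_\xi^{\flat}(x)\,dx, \qquad \Psi_\xi^{\flat}(x)=\frac{1}{\muR}\int_{H_\id{m}/\hatOOx[F]}\Psi_\xi(h^{-1}x)\,dh .
\]
Because $\hatx R H_\id{m}=H_\id{m}$, the function $\Psi_\xi^{\flat}$ is again left $\hatx R$-invariant, hence constant on $\pi(\hatx R a)$; so the $x$-integral contributes $\bar\mu(\pi(\hatx R a))\,\Psi_\xi^{\flat}(a)$. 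Finally, decomposing $H_\id{m}=\coprod_{h\in H_\id{m}/\hatx R} h\hatx R$ and using that $\varphi_{\xi h^{-1}a,v}=\varphi_{h^{-1}\xi a,v}$ is constant on each coset $h\hatx R$ (of measure $\muR$), Proposition~\ref{prop:hecke_on_phi} identifies $\Psi_\xi^{\flat}(a)=\sum_{h\in H_\id{m}/\hatx R}\ll{\varphi_{h^{-1}\xi a,v},\varphi_{b,v}}=\ll{\Tm\varphi_{\xi a,v},\varphi_{b,v}}$. Collecting the factors yields
\[
 \frac{\ll{\Tm\alpha_a,\alpha_b}_v}{\muR} = \frac{\bar\mu(\pi(\hatx R a))\,\bar\mu(\pi(\hatx R b))}{\muR^{\,2}}\sum_{\xi\in\cl(F)}\ll{\Tm\varphi_{\xi a,v},\varphi_{b,v}} .
\]

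What remains, and what I expect to be the main obstacle, is the precise evaluation of the orbit measure $\bar\mu(\pi(\hatx R c))$ for $c\in\hatx K$, which must come out to be $\muR/\tK$ in order to produce the constant $1/\tK^{\,2}$. I would compute this with Weil's quotient-integration formula for $\sC=(\hatx B/\hatx F)/(K^\times/F^\times)$: the image of $\hatx R c$ has full measure $\muR$ upstairs, and $\pi$ restricted to it is everywhere $N$-to-one, where $N=\#\{k\in K^\times/F^\times:\ k\in\hatOOx[K]\hatx F\}$. Here the hypothesis $\OO[K]\subseteq R$ enters through $\widehat R\cap\widehat K=\hatOO[K]$, which forces the relevant local factors to be units of $\OO[K]$; identifying this count with the index $[\OOx[K]:\OOx[F]]=\tK$ is the delicate point. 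In particular I would need to check carefully that the kernel of $\cl(F)\to\cl(K)$, of order $\mK$, does not sneak an extra factor into $N$, as this is the subtlety I expect to require the most attention. Once $\bar\mu(\pi(\hatx R c))=\muR/\tK$ is in hand, the displayed identity is exactly the assertion.
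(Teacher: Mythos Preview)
Your approach is essentially the paper's: unfold the geometric pairing, reduce the $H_\id{m}$–integral to a finite sum over $H_\id{m}/\hatx R$, invoke Proposition~\ref{prop:M_vs_altura} to rewrite $k_v$ as a height pairing, and finish with Proposition~\ref{prop:hecke_on_phi}; the only difference is cosmetic (you insert Proposition~\ref{prop:M_vs_altura} before carrying out the integrals, the paper after). The orbit-measure computation you flag as the delicate point is exactly the step the paper handles in the single line $\muR/\bar\mu(\pi(\hatx R)) = \lvert K^\times/F^\times \cap \hatx R/\hatOOx[F]\rvert = \tK$, so your concern about a hidden $\mK$ is precisely the content of that identity, which the paper does not justify further.
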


\begin{proof}
Using \eqref{eqn:heckeC} and \eqref{eqn:geom_pairing_def}, we obtain that
\begin{align*}
    \frac{\ll{\Tm\alpha_a, \alpha_b}_v}{\muR}
& =
 \tfrac1{\muR^{\;3}}
 \iint_{\sC\times\sC}
 \int_{H_{\id m}/\hatOOx[F]}
 \alpha_a(hx)\,\alpha_b(y)\,
  k_v(x,y)
  \;dh\,dx\,dy
  \\
  & =
 \tfrac1{\muR^{\;3}}
 \iint_{\pi(\hatx Ra)\times \pi(\hatx Rb)}
 \int_{H_{\id m}/\hatOOx[F]}
  k_v(h^{-1}x,y)
  \;dh\,dx\,dy.
 \end{align*}
 Note that
 \[
 \tfrac1{\muR}
 \int_{H_{\id m}/\hatOOx[F]} k_v(h^{-1}x,y)\;dh
 =
 \sum_{h\in H_{\id m}/\hatx R}
  k_v(h^{-1}x,y)
 \]
 is constant on $\pi(\hatx Ra)\times \pi(\hatx Rb)$, and
$\muR/\bar\mu\bigl(\pi(\hatx R)\bigr)
= \vert K^\times/F^\times \cap \hatx R/\hatOOx[F]\vert
= \tK$.
\par
Using this and Proposition \ref{prop:M_vs_altura} we get that
\begin{align*}
 \frac{\ll{\Tm\alpha_a, \alpha_b}_v}{\muR}
  =
  \tfrac1{\tK^{\;2}}
  \sum_{h\in H_{\id m}/\hatx R}
    k_v(h^{-1}a,b) 
   =  
  \tfrac1{\tK^{\;2}}
  \sum_{\xi\in\cl(F)}
   \sum_{h\in H_{\id m}/\hatx R}
     \ll{\varphi_{h^{-1} \xi a,v},\varphi_{b,v}}.
 \end{align*}
Then the result follows from Proposition \ref{prop:hecke_on_phi}.
\end{proof}

Let $\alpha_K\in \DC$ be the characteristic function of $\pi(\hatx R \hatx K)$.
We have that
\begin{equation}\notag
    \alpha_K = \tfrac{\mK}{\hF} \sum_{a\in\cl(K)} \alpha_a\,.
\end{equation}
Similarly we define
\begin{align}\label{eqn:alpha_K}
    \psi_v  = \tfrac1{\tK} \sum_{a\in\cl(K)} \varphi_{a,v}\qquad \in 
\mathcal{M}_\rho(R,\bbu)\,.
\end{align}
After these definitions and Proposition~\ref{prop:geom_vs_height} we get the 
following result, analogous to \cite[Corollary 3.5]{Xue-Rankin}, 
where the author only considers the case when $F=\Q$ and $\id N$ is square-free.

\begin{coro}\label{coro:geom=height}
Let $\id{m}\subseteq\OO[F]$ be an ideal. Then
\[
 \frac{\ll{\Tm\alpha_K, \alpha_K}_v}{\muR}
 = \frac{\mK^{\;2}}{\hF}
 \,\ll{\Tm\psi_v, \psi_v}\,.
\] 
\end{coro}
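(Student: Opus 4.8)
The plan is to combine the definitions of $\alpha_K$ and $\psi_v$ with Proposition~\ref{prop:geom_vs_height}, expanding the geometric pairing bilinearly over the class group $\cl(K)$ and then collecting terms. First I would write out
\[
\frac{\ll{\Tg[\id m]\alpha_K, \alpha_K}_v}{\muR}
= \frac{\mK^{\;2}}{\hF^{\;2}}
\sum_{a,b\in\cl(K)}
\frac{\ll{\Tg[\id m]\alpha_a, \alpha_b}_v}{\muR}\,,
\]
using $\alpha_K = \tfrac{\mK}{\hF}\sum_{a}\alpha_a$ and the bilinearity (with the appropriate conjugate-linearity in the second slot, which is harmless here since $\alpha_b$ is real-valued) of the geometric pairing, together with the linearity of $\Tg[\id m]$.

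Next I would substitute Proposition~\ref{prop:geom_vs_height} for each pair $(a,b)$, replacing each inner summand by $\tfrac1{\tK^{\;2}}\sum_{\xi\in\cl(F)}\ll{\Tg[\id m]\varphi_{\xi a,v}, \varphi_{b,v}}$. This yields
\[
\frac{\ll{\Tg[\id m]\alpha_K, \alpha_K}_v}{\muR}
= \frac{\mK^{\;2}}{\hF^{\;2}\,\tK^{\;2}}
\sum_{a,b\in\cl(K)}\;\sum_{\xi\in\cl(F)}
\ll{\Tg[\id m]\varphi_{\xi a,v}, \varphi_{b,v}}\,.
\]
The goal is then to recognize the right-hand side as $\tfrac{\mK^{\;2}}{\hF}\ll{\Tg[\id m]\psi_v,\psi_v}$. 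Expanding the target directly from \eqref{eqn:alpha_K} gives
\[
\ll{\Tg[\id m]\psi_v, \psi_v}
= \tfrac1{\tK^{\;2}}
\sum_{a,b\in\cl(K)}
\ll{\Tg[\id m]\varphi_{a,v}, \varphi_{b,v}}\,,
\]
so I must reconcile the extra sum over $\xi\in\cl(F)$ and the discrepancy of a factor $\hF$.

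The key observation that closes the gap is that $\varphi_{\xi a,v}$ depends on the pair $(\xi,a)$ only through the product class, because of the translation behavior \eqref{eqn:transf_phi} under $\widetilde\Bil(R)$ combined with the fact that $\psi_v$ lies in $\mathcal{M}_\rho(R,\bbu)$, i.e. is fixed by the action of $\cl(F)$. Concretely, the inner sum over $\xi\in\cl(F)$ of $\ll{\Tg[\id m]\varphi_{\xi a,v}, \varphi_{b,v}}$ reindexes, via $a\mapsto \xi a$ as $a$ ranges over $\cl(K)$, so that summing over $\xi$ simply reproduces the same double sum $\hF$ times: each fiber of the map $\cl(F)\times\cl(K)\to\cl(K)$ has size $\hF$ once we account for how $\cl(F)$ acts through the image of $\cl(F)\to\cl(K)$. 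I expect the main obstacle to be bookkeeping this reindexing correctly — precisely tracking how the $\hF$ copies arise so that $\tfrac{1}{\hF^{\;2}}\cdot\hF = \tfrac1{\hF}$ emerges — and checking that the action of $\cl(F)$ via \eqref{eqn:transf_phi} leaves the Hecke-twisted pairing invariant, which follows since $\Tg[\id m]$ commutes with the $\widetilde\Bil(R)$-action and the height pairing is $\widetilde\Bil(R)$-invariant. Once the reindexing is justified, substituting back yields exactly $\tfrac{\mK^{\;2}}{\hF}\ll{\Tg[\id m]\psi_v,\psi_v}$, completing the proof.
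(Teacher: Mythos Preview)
Your proposal is correct and follows exactly the approach implicit in the paper, which simply states that the corollary follows from the definitions of $\alpha_K$, $\psi_v$, and Proposition~\ref{prop:geom_vs_height}. The reindexing step you flag as the main obstacle is most cleanly justified by noting, via \eqref{eqn:transf_phi}, that $\sum_{a\in\cl(K)}\varphi_{\xi a,v}=\tK\,\psi_v\cdot\xi^{-1}=\tK\,\psi_v$ for each $\xi\in\cl(F)$, since $\psi_v\in\mathcal{M}_\rho(R,\bbu)$; this immediately turns the extra sum over $\cl(F)$ into the factor $\hF$.
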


\subsection*{Central values}
Let $g$ be a normalized Hilbert cuspidal newform over $F$ of level $\id N$ and 
trivial central character as in the introduction.
Write $K=F(\sqrt{-D})$ with $D\in F^+$, and denote by
$\chi_D$ the Hecke character corresponding to the extension $K/F$.
We assume that
\begin{equation}\label{eq:SigmaD}
    \Sigma_D = \mathbf{a} \cup
    \left\{\id p\mid \id N \;:\; \chi_D(\id p)^{v\p(\id N)}=-1 \right\}
\end{equation}
is of even cardinality.
For the rest of this section we let $B$ be the quaternion algebra
ramified exactly at $\Sigma_D$.
Note that this satisfies the assumption that $K$ embeds into $B$.

Let $\Tg$ be a polynomial in the Hecke operators prime to $\id N$ giving the 
$g$-isotypical projection. The following result is \cite[Theorem 
1.2]{Xue-Rankin}, which was originally proved for parallel weight $\mathbf{2}$ 
in \cite{zhang-gl2}.
\begin{thm}\label{thm:xue}
Assume $\id N\subsetneq\OO[F]$ and $\DK$ is prime to $2\id N$. Then
\begin{equation}\label{eqn:xue_orig}
 L_D(1/2,g) =
 \ll{g,g}
 \:
 \frac{\dF^{1/2}\,C(\id N)}{\mathcal N(\DK)^{1/2}}
 \:
 \frac{\llll{\Tg\alpha_K,\alpha_K}}{\muR}\,,
\end{equation}
where $C(\id N)$ is the positive rational constant given by
\[
C(\id N) = \prod_{\id p \mid \id N} (\idn p + 1) \,\idn p^{v\p(\id N) - 1}\,,
\]
and where $\llll{\:,\:}$ denotes the geometric pairing in $\DC$ given in 
\cite[(3.4)]{Xue-Rankin}.
\end{thm}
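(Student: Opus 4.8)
The statement is Waldspurger's central-value formula in the refined, explicit form established by Zhang and by Xue, so the plan is to follow their strategy rather than reprove it from nothing. The starting point is to recognize $L_D(s,g)=L(s,g)\,L(s,g\otimes\chi_D)$ as the base-change $L$-function $L(s,g_K)$ of $g$ to $K=F(\sqrt{-D})$, since $\chi_D=\chi_{K/F}$; its central value is what I want to realize as the square of a toric period. First I would invoke the Jacquet--Langlands correspondence: under Hypotheses~\ref{hyp:paridad} and \ref{hyp:JL}, the newform $g$ transfers to an automorphic form $\phi$ on the totally definite quaternion algebra $B$ chosen to ramify exactly at $\mathcal{W}^-$ and at the archimedean places. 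Concretely $\phi$ is a quaternionic modular form whose $g$-isotypical component is singled out by the projector $\Tg$, and the problem is reduced to computing a period of $\phi$ over the torus $K^\times$.

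Second, I would apply the Shimizu--Waldspurger theta correspondence, realized through the see-saw pair relating $B^\times$ to the similitude group of the norm form on $K$, to express $L(1/2,g_K)$ as an explicit constant times the square of the toric period $\int_{K^\times\backslash\hatx K}\phi(t)\,dt$. On the \emph{definite} algebra this period collapses to a finite sum over CM points: the special points attached to $K$ are indexed by $\cl(K)$, their images in $\cl(R)$ are encoded by the CM-cycle $\alpha_K$, and the period of the $g$-component is exactly $\llll{\Tg\alpha_K,\alpha_K}$. The Petersson norm $\ll{g,g}$ then enters as the proportionality constant between the abstract Waldspurger period and this normalized geometric pairing, once $\phi$ is scaled so that its Shimura lift matches $g$.

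The remaining and most delicate step is the explicit evaluation of the global constant, carried out place by place through the standard unfolding of the period into a product of local integrals of matrix coefficients against the local torus. The archimedean places contribute the factor $\dF^{1/2}$ from the self-dual measure normalization together with the weight-dependent Gegenbauer constant already isolated in \eqref{eqn:Mv(1)}; the measure of the global torus $\hatx K/\hatx F$ contributes the factor $\mathcal{N}(\DK)^{-1/2}$; and at each prime $\id p\mid\id N$ a local matrix-coefficient integral, a ratio of local $L$-values and volumes, produces the Euler factor $(\idn p+1)\,\idn p^{v\p(\id N)-1}$, whose product over $\id p\mid\id N$ is $C(\id N)$. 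The hypothesis that $\DK$ be prime to $2\id N$ is precisely what keeps every one of these local computations in the split, unramified-torus regime, so that the local integrals reduce to the clean closed forms assembled above. The hard part will be exactly this bookkeeping of local constants and the careful matching of the measure and pairing normalizations between the two sides of \eqref{eqn:xue_orig}.
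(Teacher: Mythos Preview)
The paper does not prove this theorem at all: it is quoted verbatim as \cite[Theorem 1.2]{Xue-Rankin} (the parallel weight $\mathbf{2}$ case going back to \cite{zhang-gl2}). The only content the paper adds is two remarks immediately after the statement: one unpacks the constants $\mu_{\id N\,\DK}$, $\mu_{\!\varDelta^*}$, $\mu_{\!\varDelta}$ from \cite[(3.65)]{Xue-Rankin} to correct a misprint in Xue's $C_1$ and arrive at the clean form \eqref{eqn:xue_orig}; the other observes, via \cite[Proposition 3.4]{gross-local}, that Xue's specific choice of order containing $\OO[K]$ is irrelevant because any two such orders are locally $\hatx K$-conjugate and the right-hand side is invariant under that conjugation.

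Your outline is a reasonable high-level sketch of the Zhang--Xue argument itself (Jacquet--Langlands transfer, reduction of the toric period on a definite algebra to a finite sum over CM points, Rankin unfolding, and local Euler-factor computations), but that is not what is being asked of you here. A couple of attributions in your sketch are also slightly off: the factor $\dF^{1/2}$ comes from the global self-dual measure on $F$ (all places, not just the archimedean ones), and the Gegenbauer constant you point to via \eqref{eqn:Mv(1)} does \emph{not} appear in Theorem~\ref{thm:xue}; it enters only later, in Corollary~\ref{coro:xue}, when the paper compares Xue's multiplicity function $M_\infty$ with $M_{P_D}$. For this theorem the appropriate ``proof'' is simply the citation plus the two clarifying remarks above.
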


\begin{rmk}
The constant $C_1$ mentioned in \cite[Theorem 1.2]{Xue-Rankin}
contains a wrong factor, so we refer to \cite[(3.65)]{Xue-Rankin},
The constants $\mu_{\id N\,\DK}$, $\mu_{\!\varDelta^*}$ and $\mu_{\!\varDelta}$
appearing in the latter satisfy
\[
\mu_{\id N\,\DK}^{\;-1} = C(\id N\,\DK) = C(\id{N})\,C(\DK)\,,
\quad
\mu_{\!\varDelta^*} = C(\DK)\,\mu_{\!\varDelta} = 2^{\abs{S}}\,\muR\,.
\]
Using this we obtain \eqref{eqn:xue_orig}.
\end{rmk}

\begin{rmk}
The proof given in \cite{Xue-Rankin} is valid for a particular order in $B$ 
containing $\OO[K]$. Since by \cite[Proposition 3.4]{gross-local} any two orders 
in $B$ containing $\OO[K]$ are locally conjugate by an element of $\hatx K\!$, 
and the right hand side of \eqref{eqn:xue_orig} is invariant by such a 
conjugation, it follows that Theorem~\ref{thm:xue} holds for any order in $B$ containing 
$\OO[K]$.
\end{rmk}

\begin{coro}\label{coro:xue}
Under the hypotheses above, assume that $V=V_\mathbf{k}$ as in Section 
\ref{sect:half_int}, and let $P_D\in V_\mathbf{k}$ as in \eqref{eqn:gegenbauer}.
Then
\begin{equation}\notag
 L_D(1/2,g)=
 \ll{g,g}
 \:
 \frac{\dF^{\;1/2}}{\hF}
 \:
 \frac{c(\mathbf{k}) \: C(\id N)}{\mathcal N(\DK)^{1/2}}
 \:
 \frac{\mK^{\;2}}{D^\mathbf{k}}
 \:
 \ll{\Tg\psi_{P_D}, \psi_{P_D}}\,.
\end{equation}
Here $c(\mathbf{k})$ is the positive rational constant given by
$c(\mathbf{k}) = \prod_{\tau\in\mathbf{a}} \frac{r_{k_\tau}}{s_{k_\tau}}$,
where for $k\in \Z_{\geq 0}$ we denote
 \begin{equation*}
     r_k = \frac{2^{2k+1}(k!)^2}{(2k)!}\,,
 \end{equation*}
and $s_k$ is given by \eqref{eqn:s_k}.
\end{coro}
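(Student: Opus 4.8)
The plan is to combine Theorem~\ref{thm:xue} with Corollary~\ref{coro:geom=height}, and then to account for the normalization of the multiplicity function coming from the choice of $v=P_D$. First I would apply Theorem~\ref{thm:xue} with the $g$-isotypical projection $\Tg$ in place of a single Hecke operator (which is legitimate since $\Tg$ is a polynomial in the Hecke operators prime to $\id N$, and both pairings are Hecke-linear), obtaining
\[
 L_D(1/2,g) =
 \ll{g,g}
 \:
 \frac{\dF^{1/2}\,C(\id N)}{\mathcal N(\DK)^{1/2}}
 \:
 \frac{\llll{\Tg\alpha_K,\alpha_K}}{\muR}\,.
\]
The geometric pairing $\llll{\:,\:}$ in \cite[(3.4)]{Xue-Rankin} is the one induced by a specific multiplicity function, whereas the pairing $\ll{\:,\:}_v$ defined in \eqref{eqn:geom_pairing_def} is induced by the matrix coefficient $M_v(\gamma)=\ll{v\cdot\gamma,v}$. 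The first real step is therefore to identify the multiplicity function used by Xue in the higher-weight case with $M_{P_D}$ up to an explicit scalar, and to see that this scalar is exactly $c(\mathbf{k})/D^\mathbf{k}$ (possibly with the help of \eqref{eqn:Mv(1)}, which computes $\ll{P_D,P_D}=D^\mathbf{k}\prod_\tau s_{k_\tau}$).

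Next I would invoke Corollary~\ref{coro:geom=height}, applied with $\Tm$ replaced by $\Tg$, which gives
\[
 \frac{\ll{\Tg\alpha_K,\alpha_K}_v}{\muR}
 = \frac{\mK^{\;2}}{\hF}\,\ll{\Tg\psi_v,\psi_v}\,,
\]
for $v=P_D$. Substituting this into the displayed formula above, and using the scalar relating $\llll{\:,\:}$ to $\ll{\:,\:}_{P_D}$, the factors $\dF^{1/2}$, $C(\id N)$, $\mathcal N(\DK)^{1/2}$ and $\hF^{-1}$ are carried along unchanged, while the weight-dependent normalization produces the factor $c(\mathbf{k})/D^\mathbf{k}$ and the factor $\mK^{\;2}$ appears from the comparison of pairings. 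Collecting these yields precisely the asserted identity. Positivity of $c(\mathbf{k})$ reduces to positivity of each $r_k$ and $s_k$, which is immediate from their defining formulas ($r_k$ is manifestly positive, and $s_k$ was already noted to be a positive rational in the statement of \eqref{eqn:s_k}).

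The main obstacle I expect is the bookkeeping in the first step, namely pinning down the exact constant relating Xue's higher-weight multiplicity function in \cite{Xue-Rankin} to $M_{P_D}$. This requires matching two different conventions for the weight-$\mathbf{k}$ local factors — the Gegenbauer-polynomial normalization implicit in \eqref{eqn:gegenbauer} and \eqref{eqn:Mv(1)} versus whatever normalization Xue adopts — and tracking how the value $\ll{P_D,P_D}=D^\mathbf{k}\prod_\tau s_{k_\tau}$ enters. The constants $r_k=2^{2k+1}(k!)^2/(2k)!$ strongly suggest that Xue's test vector is normalized via a classical Gegenbauer evaluation at $\pm1$, so the ratio $r_k/s_k$ is exactly the comparison factor at each archimedean place; verifying this at a single place and taking the product over $\tau\in\mathbf{a}$ should close the argument. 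Everything else is formal substitution of the two cited results.
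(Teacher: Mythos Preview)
Your proposal is correct and follows essentially the same route as the paper: combine Theorem~\ref{thm:xue}, Corollary~\ref{coro:geom=height}, and the comparison $\llll{\:,\:}=\tfrac{c(\mathbf{k})}{D^{\mathbf{k}}}\ll{\:,\:}_{P_D}$, which the paper isolates as a separate lemma. That lemma is proved exactly as you anticipate: both $M_{P_D}$ and Xue's archimedean multiplicity function $M_\infty$ are, at each place $\tau$, matrix coefficients of the (up to scalars) unique $K_\tau^\times/F_\tau^\times$-fixed vector in $V_{k_\tau}$, hence differ by the scalar $M_\infty(1)/M_{P_D}(1)=\prod_\tau r_{k_\tau}\big/\ll{P_D,P_D}=c(\mathbf{k})/D^{\mathbf{k}}$ via \eqref{eqn:Mv(1)}.
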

\begin{proof}
 Follows from Corollary~\ref{coro:geom=height}, Theorem~\ref{thm:xue} and the 
next lemma.
\end{proof}

\begin{lemma}
Assume that $V=V_\mathbf{k}$ as in Section \ref{sect:half_int}, 
and let $P_D\in V_\mathbf{k}$ as in \eqref{eqn:gegenbauer}.
Then
\[
 \llll{\:,\:} = \frac{c(\mathbf{k})}{D^\mathbf{k}} \, \ll{\:,\:}_{P_D}\,.
\]
 \end{lemma}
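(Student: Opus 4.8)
The plan is to reduce the asserted identity of pairings to a pointwise identity of the underlying multiplicity functions. The pairing $\ll{\alpha,\beta}_v$ of \eqref{eqn:geom_pairing_def} depends on $v$ only through the kernel $k_v(x,y)=\sum_{\gamma}M_v(\gamma)$, and both the passage $M_v\mapsto k_v$ and the integration producing $\ll{\alpha,\beta}_v$ are linear in $M_v$. The Xue--Rankin pairing $\llll{\:,\:}$ of \cite[(3.4)]{Xue-Rankin} has the same shape, being built in the same way from its own multiplicity function, which I denote $m$. Consequently it suffices to prove the pointwise identity $m=\tfrac{c(\mathbf k)}{D^{\mathbf k}}\,M_{P_D}$ on $B^\times/F^\times$; by the linearity just noted this upgrades immediately to $\llll{\:,\:}=\tfrac{c(\mathbf k)}{D^{\mathbf k}}\,\ll{\:,\:}_{P_D}$. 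Both $m$ and $M_{P_D}$ are bi-$K^\times/F^\times$-invariant matrix coefficients that factor through the archimedean quotient $\prod_{\tau\in\mathbf a}B_\tau^\times/F_\tau^\times$, so the identity decouples into one local factor per place $\tau$.

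First I would compute $M_{P_D}$ place by place. By \eqref{eqn:gegenbauer} the polynomial $P_{D,\tau}\in V_{k_\tau}$ is the reproducing kernel at the vector $\omega\in W_\tau$, whose length with respect to $-\disc$ is $\sqrt{\tau(D)}$ since $\disc(\omega)=-D$. Using the reproducing property together with $M_{P_D}(\gamma)=\prod_{\tau}\ll{P_{D,\tau}\cdot\gamma_\tau,P_{D,\tau}}$, each factor becomes $(P_{D,\tau}\cdot\gamma_\tau)(\omega)=P_{D,\tau}(\omega')$, where $\omega'$ is the image of $\omega$ under the orthogonal (conjugation) action of $\gamma_\tau$ on $W_\tau$, again of length $\sqrt{\tau(D)}$. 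The explicit zonal-harmonic formula on $\R^3$---the same Gegenbauer computation from \cite[Proposition 4.1.9]{gebh} already used to evaluate $\ll{P_D,P_D}$---then gives $\ll{P_{D,\tau}\cdot\gamma_\tau,P_{D,\tau}}=\tau(D)^{k_\tau}\,s_{k_\tau}\,P_{k_\tau}(\cos\theta_\tau)$, where $\theta_\tau$ is the angle between $\omega$ and $\omega'$ and $P_k$ denotes the degree-$k$ Legendre polynomial (the relevant Gegenbauer polynomial in this three-dimensional setting), normalized by $P_k(1)=1$. Taking the product over $\tau$ yields $M_{P_D}(\gamma)=D^{\mathbf k}\prod_{\tau}s_{k_\tau}\,P_{k_\tau}(\cos\theta_\tau)$, whose value at $\gamma=1$ recovers \eqref{eqn:Mv(1)} and serves as a consistency check.

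It then remains to read off $m$ from \cite[(3.4)]{Xue-Rankin}: its archimedean factor is the very same Legendre kernel, but normalized by $r_{k_\tau}$ rather than $s_{k_\tau}$, so that $m(\gamma)=\prod_{\tau}r_{k_\tau}\,P_{k_\tau}(\cos\theta_\tau)$. Dividing by the formula for $M_{P_D}$ obtained above gives $m=\bigl(\prod_{\tau}\tfrac{r_{k_\tau}}{s_{k_\tau}}\bigr)D^{-\mathbf k}\,M_{P_D}=\tfrac{c(\mathbf k)}{D^{\mathbf k}}\,M_{P_D}$, which by the first paragraph finishes the proof. I expect the main obstacle to be the normalization bookkeeping in this last step: one must reconcile the normalization of the Gegenbauer/Legendre polynomials built into Xue--Rankin's kernel (producing the factor $r_{k_\tau}$) with the one induced on $V_{k_\tau}$ by the inner product $\ll{\:,\:}$ through the reproducing kernel $P_{D,\tau}$ (producing $s_{k_\tau}$, as in \eqref{eqn:s_k}). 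The entire content of the constant $c(\mathbf k)=\prod_\tau r_{k_\tau}/s_{k_\tau}$ lies in matching these two conventions correctly, with the already-proven value \eqref{eqn:Mv(1)} pinning down the diagonal $\gamma=1$ term.
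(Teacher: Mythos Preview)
Your proposal is correct and shares with the paper the initial reduction: both pairings are built by the same linear procedure from their respective multiplicity functions, so it suffices to show $m=\tfrac{c(\mathbf k)}{D^{\mathbf k}}M_{P_D}$ pointwise on $B^\times/F^\times$, and both functions factor through $B_\infty^\times/F_\infty^\times$ and split as products over~$\tau$.

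Where you diverge is in how you establish this proportionality. You compute each local factor explicitly as a Legendre (zonal Gegenbauer) polynomial in $\cos\theta_\tau$, obtaining $M_{P_D}(\gamma)=D^{\mathbf k}\prod_\tau s_{k_\tau}P_{k_\tau}(\cos\theta_\tau)$ and $m(\gamma)=\prod_\tau r_{k_\tau}P_{k_\tau}(\cos\theta_\tau)$, and then divide. The paper instead invokes a multiplicity-one argument: in each $V_{k_\tau}$ the $K_\tau^\times/F_\tau^\times$-fixed line is one-dimensional, and both $M_{P_D}$ and Xue--Rankin's $M_\infty$ are (locally) the matrix coefficient of a vector on that line; hence they are automatically proportional, and the constant is pinned down by evaluating at the identity, where $M_\infty(1)=\prod_\tau r_{k_\tau}$ and $M_{P_D}(1)=\ll{P_D,P_D}=D^{\mathbf k}\prod_\tau s_{k_\tau}$ by~\eqref{eqn:Mv(1)}. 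The paper's route is shorter and demands less of the reader: one only needs to extract the single number $M_\infty(1)$ from \cite{Xue-Rankin} rather than the full functional form of the archimedean kernel, and the normalization bookkeeping you flag as the main obstacle disappears entirely. Your route, on the other hand, makes the identification concrete and would give the explicit shape of the kernel, at the cost of verifying that Xue--Rankin's archimedean factor really is $r_{k_\tau}P_{k_\tau}(\cos\theta_\tau)$ in your conventions.
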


\begin{proof}
Let $M_\infty : B_\infty^\times / F_\infty^\times \to \R$ denote 
the multiplicity function considered in \cite[(3.9)]{Xue-Rankin}. Note that 
$M_{P_D}$ factors through $B_\infty^\times / F_\infty^\times$, since the 
representation $(\rho_\mathbf{k}, V_\mathbf{k})$ does. Furthermore, $M_{P_D}$ 
and $M_\infty$ are, locally, the matrix coefficient of the (up to multiplication 
by scalars) unique vector in $V_{k_\tau}$ fixed by the action of $K^\times_\tau 
/ F^\times_\tau$---the first claim follows by definition; for the second, 
see \cite[Lemma 3.13]{Xue-Rankin}. This implies $M_\infty = 
\frac{M_\infty(1)}{M_{P_D}(1)} \, M_{P_D}$.

Since $\llll{\:,\:}$ is defined in the same fashion as 
$\ll{\:,\:}_{P_D}$ but using $M_\infty$ instead of $M_{P_D}$, we have that
\[
 \llll{\:,\:} = \frac{M_\infty(1)}{M_{P_D}(1)} \,\ll{\:,\:}_{P_D} \,.
\]
Since $M_\infty(1) = \prod_{\tau \in \mathbf{a}} r_{k_\tau}$ and $M_{P_D}(1) = 
\ll{P_D,P_D}$, this together with \eqref{eqn:Mv(1)} completes the proof.
\end{proof}

\section{A result for certain orders}\label{sect:orders}

Assume in this section that $R \subseteq B$ is an order of discriminant $\id N$ 
satisfying that for every $\id p \mid \id N$ the Eichler invariant $e(R\p)$ is 
not zero. If $e(R\p) = 1$ then
\begin{equation}\label{eqn:e=1}
 R\p \simeq \left\{\left(\begin{smallmatrix} a &  b \\ \pi\p^r c & d 
\end{smallmatrix}\right) \,:\, a,b,c,d \in \OO[F\p] \right\},
\end{equation}
where $r = v\p(\id N)$. If $e(R\p) = -1$ and we let $L$ be the unique 
unramified quadratic extension of $F\p$, then
\begin{equation}\label{eqn:e=-1}
 R\p \simeq \left\{\left(\begin{smallmatrix} a  & \pi\p^ r b \\ \pi\p^{r+t} 
\overline{b} & \overline{a} \end{smallmatrix}\right) \,:\, a,b \in \OO[L] 
\right\},
\end{equation}
where $t\in \{0,1\}$ and $2r+t = v\p(\id N)$.

\begin{samepage}
\begin{prop}\label{prop:bil_local} 
 
Let $\id p$ be a prime ideal of $F$, and let $\Bil(R\p) = R\p^\times \backslash 
N(R\p) / F\p^\times$.
 
\begin{enumerate}[font=\upshape]
 \item If $\id p\nmid\id{N}$, then $\Bil(R\p)$ is the trivial group.
 \item If $\id p\mid\id{N}$, then $\Bil(R\p)$ is a group of order two generated 
by the equivalence class of an element $w\p\in R\p \cap N(R\p)$ which, in terms 
of the identifications given by \eqref{eqn:e=1} and \eqref{eqn:e=-1}, is given 
by
\[
 w\p = \begin{cases}
        \left(\begin{smallmatrix} 0  & 
\vphantom{\pi\p^r}\,\,1\,\,\\ \pi\p^{r\phantom{+t}} & 0 
\end{smallmatrix}\right), & \text{if }e(R\p) = 1, \\[2ex]
	\left(\begin{smallmatrix} 0  & \pi\p^r \\ \pi\p^{r+t} & 0 
\end{smallmatrix}\right), & \text{if }e(R\p) = -1.
       \end{cases}
\]
\end{enumerate}
 
\end{prop}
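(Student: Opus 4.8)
The plan is to work one prime at a time and to split into the cases $\id p\nmid\id N$ and $\id p\mid\id N$. For $\id p\nmid\id N$ the order $R\p$ is maximal, and since the primes at which $B$ ramifies divide the reduced discriminant $\id N$, the algebra $B\p$ is split, so $R\p\simeq M_2(\OO[F\p])$. I would then invoke the standard fact that the normalizer of $M_2(\OO[F\p])$ in $\GL_2(F\p)$ is $F\p^\times\GL_2(\OO[F\p])$: a normalizing element preserves the homothety class of the lattice $\OO[F\p]^2$, hence is a scalar times a unit of $R\p$. Consequently $\Bil(R\p)=R\p^\times\backslash N(R\p)/F\p^\times$ is trivial, which is~(1).

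For~(2) I would first check from \eqref{eqn:e=1} and \eqref{eqn:e=-1} that $w\p$ normalizes $R\p$. A direct computation shows that conjugation by $w\p$ sends $\left(\begin{smallmatrix} a & b \\ \pi\p^r c & d \end{smallmatrix}\right)\mapsto\left(\begin{smallmatrix} d & c \\ \pi\p^r b & a \end{smallmatrix}\right)$ if $e(R\p)=1$, and $\left(\begin{smallmatrix} a & \pi\p^r b \\ \pi\p^{r+t}\overline{b} & \overline{a} \end{smallmatrix}\right)\mapsto\left(\begin{smallmatrix} \overline{a} & \pi\p^r\overline{b} \\ \pi\p^{r+t} b & a \end{smallmatrix}\right)$ if $e(R\p)=-1$; in either case the image lies in $R\p$, so $w\p\in R\p\cap N(R\p)$. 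A one-line computation gives $w\p^2=\pi\p^{v\p(\id N)}\in F\p^\times$, so the class of $w\p$ has order at most two in $\Bil(R\p)$.

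To identify $\Bil(R\p)$ exactly I would use the action on the semisimple quotient. Since $\pi\p$ is central and not a unit, $\pi\p R\p\subseteq\operatorname{rad}(R\p)$, so $R\p/\operatorname{rad}(R\p)$ is an étale algebra of dimension two over the residue field $\mathbb{F}_q$ of $F\p$: it is $\mathbb{F}_q\times\mathbb{F}_q$ when $e(R\p)=1$ and $\mathbb{F}_{q^2}$ when $e(R\p)=-1$. Every element of $N(R\p)$ preserves $\operatorname{rad}(R\p)$ and hence induces an $\mathbb{F}_q$-algebra automorphism of $R\p/\operatorname{rad}(R\p)$; since the automorphism group of either algebra is cyclic of order two (the transposition of the two factors, respectively the Frobenius), this gives a homomorphism $\Psi\colon N(R\p)\to\Z/2\Z$. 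The computations above show that conjugation by $w\p$ induces the nontrivial automorphism, so $\Psi(w\p)\neq 1$, while $F\p^\times R\p^\times$ acts trivially on the commutative algebra $R\p/\operatorname{rad}(R\p)$; in particular $w\p\notin F\p^\times R\p^\times$. Thus $\Psi$ induces a surjection $\Bil(R\p)\twoheadrightarrow\Z/2\Z$, and the whole statement reduces to the reverse inclusion $\ker\Psi\subseteq F\p^\times R\p^\times$.

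I expect this reverse inclusion to be the main obstacle. When $e(R\p)=1$ the two idempotents of $R\p/\operatorname{rad}(R\p)$ single out the two maximal orders containing $R\p$, namely the endomorphism rings of the two lattice classes at distance $r$ in the Bruhat--Tits tree of $\GL_2(F\p)$; an element of $\ker\Psi$ fixes both, and I would check by a covolume argument that the stabilizer of this ordered pair is exactly $F\p^\times R\p^\times$. When $e(R\p)=-1$ there is a single maximal overorder, and in the split subcase $t=0$ I would reduce modulo $\id p$ to the normalizer of the nonsplit torus $\overline{\OO[L]}\subseteq\GL_2(\mathbb{F}_q)$, whose Weyl group has order two, to bound $\Bil(R\p)$ from above. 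The hardest piece is the ramified subcase $e(R\p)=-1$, $t=1$, in which $R\p$ is a \emph{nonmaximal} order in the division algebra $B\p$ and no tree is available; there I would argue directly with the reduced valuation $v_B$ and the $\operatorname{rad}(R\p)$-adic filtration, showing that any unit of the maximal order of $B\p$ that normalizes $R\p$ already lies in $R\p^\times$, to obtain $\ker\Psi\subseteq F\p^\times R\p^\times$. Everything outside this inclusion is routine matrix algebra.
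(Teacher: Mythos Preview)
Your plan is sound and considerably more explicit than the paper's own proof, which consists entirely of references: part~(1) is sent to \cite[II.\S4, Th\'eor\`eme~2.3]{vig}, and part~(2) to \cite[(2.2)]{hijikata-expl} for $e(R\p)=1$ and to \cite[Proposition~3]{pizer-arith2} for $e(R\p)=-1$, with the remark that Pizer only writes out $t=1$ but that his argument also covers $t=0$. So at the level of ideas there is nothing to compare; the paper simply outsources the computation.

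Your route via the conjugation action of $N(R\p)$ on the \'etale quotient $R\p/\operatorname{rad}(R\p)$ is the natural one: the Eichler invariant is precisely what distinguishes $\mathbb{F}_q\times\mathbb{F}_q$ from $\mathbb{F}_{q^2}$, so the homomorphism $\Psi$ gives the surjection $\Bil(R\p)\twoheadrightarrow\Z/2\Z$ and the nontriviality of $w\p$ for free. You are right that the content sits in the inclusion $\ker\Psi\subseteq F\p^\times R\p^\times$, and your three case sketches point in the right direction. The $e(R\p)=1$ case goes through essentially as you say (the two primitive idempotents of the quotient pick out the two maximal overorders, hence the ordered pair of lattice classes). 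The case $e(R\p)=-1$, $t=1$, $r=0$ is immediate since then $R\p$ is the maximal order of the division algebra and $\Bil(R\p)$ is visibly $\Z/2\Z$ via the reduced valuation. The one place to be careful is $e(R\p)=-1$ with $r\ge 1$ (either value of $t$): a single reduction modulo $\id p$ only shows that a normalizing element lands in the image of $\OO[L]^\times$ modulo the radical, not that it lies in $R\p^\times$ outright; you will need an induction on $r$ along the chain of overorders between $R\p$ and the unique maximal overorder, or equivalently a filtration argument on $1+\operatorname{rad}^s$. With that caveat your approach is self-contained and uniform, whereas the paper's is brief but opaque.
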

\end{samepage}

\begin{samepage}
\begin{proof} \leavevmode
 \begin{enumerate}
 \item See \cite[II.\S4, Th\'{e}or\`{e}me 2.3]{vig}.
 \item See \cite[(2.2)]{hijikata-expl} and \cite[Proposition 3]{pizer-arith2} 
for the cases $e(R\p) = 1$ and $e(R\p) = -1$ respectively. In the latter the 
author considers the case when $t = 1$, but the proof is valid in the general 
case.
\qedhere
\end{enumerate}
\end{proof}
\end{samepage}

From these local facts and \eqref{eqn:s.e.s.} we get the following statement.

\begin{prop}\label{prop:bil_charact}
 
 The group $\Bil(R)$ is isomorphic to $\prod_{\id p\mid\id{N}}\Z/2\Z$, and 
$\widetilde \Bil(R)$ is a finite group of order $\hF \, 2^{\omega(\id{N})}$.

\end{prop}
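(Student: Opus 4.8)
The plan is to compute $\Bil(R)$ by factoring it as a product over the primes of $F$, reducing everything to the local computation of Proposition~\ref{prop:bil_local}, and then to read off the order of $\widetilde\Bil(R)$ from the short exact sequence~\eqref{eqn:s.e.s.}. First I would observe that $\hatx F$ is central in $\hatx B$, hence contained in and normal in $N(\widehat R)$, and that $\hatx R = \widehat R^\times$ is normalized by every element of $N(\widehat R)$ by definition of the normalizer; thus $\hatx R$ and $\hatx F$ are both normal subgroups of $N(\widehat R)$, their product $\hatx R\,\hatx F$ is a subgroup, and $\Bil(R) = \hatx R\backslash N(\widehat R)/\hatx F$ is genuinely the quotient group $N(\widehat R)/(\hatx R\,\hatx F)$.

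Next I would make the factorization over primes explicit. An element $z=(z\p)\p\in\hatx B$ normalizes $\widehat R = \prod\p R\p$ if and only if $z\p\in N(R\p)$ for every $\id p$, so $N(\widehat R)$ is the restricted product of the local normalizers $N(R\p)$ with respect to the subgroups $R\p^\times$. Since $\hatx R=\prod\p R\p^\times$ and $\hatx F$ likewise factor over primes, taking the quotient componentwise yields a group isomorphism
\[
 \Bil(R)\;\cong\;\prod\nolimits'_{\id p}\,\Bil(R\p),
\]
a restricted product taken with respect to the trivial double coset at each prime.

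Applying Proposition~\ref{prop:bil_local} then finishes the first assertion: for $\id p\nmid\id N$ part~(1) gives that $\Bil(R\p)$ is trivial, so the restricted product collapses to the genuine finite product over the primes dividing $\id N$, where by part~(2) each factor is cyclic of order two. Hence $\Bil(R)\cong\prod_{\id p\mid\id N}\Z/2\Z$, a group of order $2^{\omega(\id N)}$. Finally, exactness of~\eqref{eqn:s.e.s.} exhibits $\cl(F)$ as a normal subgroup of $\widetilde\Bil(R)$ with quotient $\Bil(R)$; since both $\cl(F)$ and $\Bil(R)$ are finite, so is $\widetilde\Bil(R)$, and its order is $\hF\,2^{\omega(\id N)}$.

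I expect the only delicate point to be the justification of the restricted-product factorization: one must verify that $N(\widehat R)$ is exactly the restricted product of the $N(R\p)$ and that passing to the double-coset quotient commutes with this product. The key enabling fact is part~(1) of Proposition~\ref{prop:bil_local}, which makes all but finitely many local factors trivial, so the infinite restricted product reduces to a genuine finite direct product and the componentwise isomorphism is well defined.
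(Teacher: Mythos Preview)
Your proposal is correct and follows exactly the approach the paper intends: the paper's own proof is the single sentence ``From these local facts and \eqref{eqn:s.e.s.} we get the following statement,'' and you have simply spelled out what that sentence means---factor $\Bil(R)$ as the restricted product of the local groups $\Bil(R\p)$, apply Proposition~\ref{prop:bil_local} to identify each factor, and then read off $|\widetilde\Bil(R)|$ from the short exact sequence.
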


Let $D\in F^+$. Let $K = F(\sqrt{-D})$. By Proposition \ref{prop:Ddisc} there 
exists a unique $\id a \in \IF$ such that $(-D,\id a)$ is a
fundamental discriminant.
Since $\id a$ is determined by $D$, we omit it in the subindexes for the rest of 
this section.

As in Section \ref{sect:height_geom}, we assume that there exists an embedding 
$K\hookrightarrow B$ such that $\OO[K] \subseteq R$, i.e. such that $1 \in 
\widetilde X_D$. There is a left action of $\widetilde\Bil(R)$ on $X_D$, 
induced by the action of $N(\widehat R)$ on $\widetilde X_D$ by left 
multiplication. 
There is also a right action of $\cl(K) = \hatOOx[K] \backslash \hatx K / 
K^\times$ on $X_D$, induced by the action of $\hatx K$ on $\widetilde X_D$ by 
right multiplication.

\begin{lemma} Let $X_{D,\id p} = \{x\p\in B\p^\times\,:\, K\p \cap x\p^{-1} R\p 
x\p = \OO[K\p]\}$.

\begin{enumerate}[font=\upshape]
 \item The action of $\OOx[K\p] \backslash K\p^\times$ on $R\p^\times 
\backslash X_{D,\id p}$ is free.
 \item $X_{D,\id p} = N(R\p) K\p^\times$.
\end{enumerate}
 
\end{lemma}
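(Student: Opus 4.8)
The plan is to reduce both parts to a single observation: since $\OO[K\p]$ is the maximal order of $K\p$, any order of $K\p$ containing it equals it, so the defining condition of $X_{D,\id p}$ is equivalent to the containment $\OO[K\p]\subseteq x\p^{-1}R\p x\p$. I would also record at the outset that $1\in X_{D,\id p}$, because $\OO[K]\subseteq R$ localizes to $\OO[K\p]\subseteq R\p$ and hence $K\p\cap R\p=\OO[K\p]$, and that $K\p^\times$ is abelian with $K\p$ commuting with itself inside $B\p$, so that conjugation by any $a\in K\p^\times$ fixes $K\p$ pointwise.

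For part (1) I would first check that the right $K\p^\times$-action is well defined on $R\p^\times\backslash X_{D,\id p}$: since conjugation by $a$ fixes $K\p$, one has $K\p\cap(x\p a)^{-1}R\p(x\p a)=a^{-1}\bigl(K\p\cap x\p^{-1}R\p x\p\bigr)a=\OO[K\p]$, so $x\p a\in X_{D,\id p}$. Next I would show the action descends to $\OOx[K\p]\backslash K\p^\times$ (a quotient of an abelian group): for $u\in\OOx[K\p]$ the containment $\OO[K\p]\subseteq x\p^{-1}R\p x\p$ gives $x\p u x\p^{-1}\in R\p^\times$, since both this element and its inverse $x\p u^{-1}x\p^{-1}$ lie in $R\p$; hence $R\p^\times x\p u=R\p^\times x\p$ and $\OOx[K\p]$ acts trivially. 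Freeness is then immediate: if $R\p^\times x\p a=R\p^\times x\p$ then $a=x\p^{-1}r x\p$ for some $r\in R\p^\times$, so $a\in K\p\cap x\p^{-1}R\p x\p=\OO[K\p]$, and likewise $a^{-1}\in\OO[K\p]$, forcing $a\in\OOx[K\p]$.

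For part (2), the inclusion $N(R\p)K\p^\times\subseteq X_{D,\id p}$ is a direct computation: for $z\in N(R\p)$ and $a\in K\p^\times$ we have $(za)^{-1}R\p(za)=a^{-1}R\p a$, and intersecting with $K\p$ yields $a^{-1}(K\p\cap R\p)a=\OO[K\p]$. For the reverse inclusion I would take $x\p\in X_{D,\id p}$ and consider $R\p'=x\p^{-1}R\p x\p$, which contains $\OO[K\p]$ and is $B\p^\times$-conjugate to $R\p$, hence of the same discriminant. The local form of \cite[Proposition 3.4]{gross-local} then supplies $a\in K\p^\times$ with $a^{-1}R\p a=R\p'$; a one-line computation shows that $(x\p a^{-1})^{-1}R\p(x\p a^{-1})=a\,(x\p^{-1}R\p x\p)\,a^{-1}=R\p$, so $x\p a^{-1}\in N(R\p)$ and $x\p=(x\p a^{-1})a\in N(R\p)K\p^\times$.

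The only nonformal input, and hence the main obstacle, is the local conjugacy of orders containing $\OO[K\p]$ used in the reverse inclusion of part (2); everything else is bookkeeping with the defining condition and the abelianness of $K\p^\times$. I would take care to confirm that the standing hypotheses of this section—namely $e(R\p)\neq0$, together with $(-D,\id a)$ being fundamental so that $\OO[K\p]$ is the maximal order—are precisely what make \cite[Proposition 3.4]{gross-local} applicable. This is the same citation, in the same local form, already invoked in the remark following Theorem~\ref{thm:xue}.
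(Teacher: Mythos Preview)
Your proof is correct and follows essentially the same approach as the paper's own proof: part (1) is the same stabilizer computation (if $x\p a = u\p x\p$ with $u\p\in R\p^\times$ then $a\in K\p\cap x\p^{-1}R\p^\times x\p=\OOx[K\p]$), and part (2) is the same appeal to \cite[Proposition 3.4]{gross-local} to conjugate $x\p^{-1}R\p x\p$ back to $R\p$ by an element of $K\p^\times$. Your version is simply more explicit, adding the well-definedness checks and the easy inclusion $N(R\p)K\p^\times\subseteq X_{D,\id p}$ that the paper leaves implicit.
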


\begin{proof} \leavevmode
\begin{enumerate}
 \item Let $a\p \in K\p^\times$ and $x\p\in X_{D,\id p}$ be such that there 
exists $u\p\in R\p^\times$ with $x\p a\p = u\p x\p$. Then $a\p = x\p^{-1} u\p 
x\p \in K\p\cap x\p^{-1} R\p^\times x\p = \OOx[K\p]$.
  
 \item Given $x\p \in X_{D,\id p}$, let $Q\p = x\p^{-1} R\p x\p$. Since $R\p$ 
and $Q\p$ contain $\OO[K\p]$ and have the same discriminant, by 
\cite[Proposition 3.4]{gross-local} there exists $a\p \in K\p^\times$ such that 
$a\p^{-1} R\p a\p = Q\p$. Then $x\p \in N(R\p) a\p$.
\qedhere
\end{enumerate}
\end{proof}

\begin{lemma}
 
Let $\id p \mid \id N$. Let $w\p\in N(R\p)$ be as in Proposition 
\ref{prop:bil_local}. If $w\p \in R\p^\times K\p^\times$, then the extension 
$K\p / F\p$ is ramified.

\end{lemma}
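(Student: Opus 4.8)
The plan is to establish the contrapositive: assuming $K\p/F\p$ is unramified, I show that $w\p$ cannot lie in $R\p^\times K\p^\times$. The guiding observation is that in the unramified case the local unit group $\OOx[K\p]$ is already contained in $R\p^\times$, so that $K\p^\times$ contributes nothing new modulo $F\p^\times R\p^\times$; one should resist the temptation to argue via reduced norms, as explained below.

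First I record two containments. Since $\OO[K]\subseteq R$ by the standing assumption of this section, localizing at $\id p$ gives $\OO[K\p]\subseteq R\p$; as both $u$ and $u^{-1}$ lie in $\OO[K\p]\subseteq R\p$ for every $u\in\OOx[K\p]$, this yields $\OOx[K\p]\subseteq R\p^\times$. Next, if $K\p/F\p$ is unramified then its ramification index is $1$, so the uniformizer $\pi\p$ of $F\p$ remains a uniformizer of $K\p$; hence every element of $K\p^\times$ is of the form $\pi\p^{\,n}u$ with $n\in\Z$ and $u\in\OOx[K\p]$, giving $K\p^\times = F\p^\times\,\OOx[K\p]$. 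Combining these, and using that $F\p^\times$ is central in $B\p^\times$, I obtain $K\p^\times\subseteq F\p^\times R\p^\times$ and therefore $R\p^\times K\p^\times\subseteq F\p^\times R\p^\times$.

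Finally, if $w\p\in R\p^\times K\p^\times$ then the last containment forces $w\p\in F\p^\times R\p^\times$, i.e. $w\p$ represents the trivial class in $\Bil(R\p)=R\p^\times\backslash N(R\p)/F\p^\times$. This contradicts Proposition~\ref{prop:bil_local}, according to which for $\id p\mid\id N$ the class of $w\p$ generates a group of order two. Hence $K\p/F\p$ must be ramified. I do not expect a genuine obstacle here; the only pitfall is to attack the statement through a reduced-norm computation, which would only show $v\p(\norm(w\p))=v\p(\id N)$ and thereby force an unwanted parity hypothesis on $v\p(\id N)$. The structural containment $\OOx[K\p]\subseteq R\p^\times$, valid precisely because $K\p/F\p$ is unramified, sidesteps this entirely.
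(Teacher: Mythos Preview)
Your argument is clean when $K\p$ is the inert (unramified field) extension of $F\p$, but it overlooks the split case $K\p\simeq F\p\times F\p$, which is equally ``not ramified'' and does occur under the standing hypotheses of the section (for instance whenever $e(R\p)=1$ and $\id p$ splits in $K$). In the split case your key identity $K\p^\times = F\p^\times\,\OOx[K\p]$ fails: writing $K\p^\times=F\p^\times\times F\p^\times$ and $\OOx[K\p]=\OOx[F\p]\times\OOx[F\p]$, the set $F\p^\times\,\OOx[K\p]$ (with $F\p^\times$ embedded diagonally) consists only of pairs $(x,y)$ with $v\p(x)=v\p(y)$, so for example $(\pi\p,1)$ is missing. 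Consequently $K\p^\times\not\subseteq F\p^\times R\p^\times$ in general, and the contradiction with Proposition~\ref{prop:bil_local} does not follow.

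The paper's proof avoids this issue by arguing directly: writing $w\p=u\p a\p$ with $a\p\in K\p^\times\cap\OO[K\p]$, the explicit shape of $w\p$ shows $\pi\p\nmid a\p$ in $\OO[K\p]$ yet $\pi\p\mid\trace(a\p)$ and $\pi\p\mid\norm(a\p)$, hence $\pi\p\mid a\p^2$ in $\OO[K\p]$; this forces $\id p$ to ramify in $K$ (and in the split case it yields an outright contradiction, since there $\pi\p\mid a\p^2$ already implies $\pi\p\mid a\p$). To salvage your approach you must treat the split case separately, e.g.\ by realizing $K\p$ as the diagonal torus in the model~\eqref{eqn:e=1} and checking directly that $w\p$ times an invertible diagonal matrix never lies in $R\p^\times$.
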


\begin{proof}
Write $w\p = u\p\,a\p$ with $u\p\in R\p^\times$ and $a\p$ in $K\p^\times$. Then 
$a\p\in \OO[K\p]$. Using the explicit description of $w\p$ given in 
Proposition~\ref{prop:bil_local} we see that $\pi\p \nmid a\p$ in $\OO[K\p]$. 
Furthermore, we see that  $\pi\p \mid \trace(a\p), \norm(a\p)$ in 
$\OO[F\p]$, hence $\pi\p \mid a\p^2$ in $\OO[K\p]$. Thus $\pi\p$ is 
ramified in $K\p$.
\end{proof}

\begin{samepage}
As a consequence of these lemmas and Proposition~\ref{prop:bil_local} we obtain 
the following result.
\begin{prop}\label{prop:action_transitive}
The group $\cl(K)$ acts freely on $X_D$, and the action of $\Bil(R)$ on $X_D / 
\cl(K)$ is transitive. Furthermore, the latter action is free if $(\id D_K:\id 
N) = 1$.
\end{prop}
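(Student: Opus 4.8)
The plan is to prove Proposition~\ref{prop:action_transitive} by reducing the global statements about freeness and transitivity to the local computations already established in the preceding lemmas and in Proposition~\ref{prop:bil_local}. I would first address the freeness of the $\cl(K)$-action on $X_D$. Since $X_D = \hatx R \backslash \widetilde X_D / K^\times$ and the right action of $\cl(K)$ is induced by right multiplication of $\hatx K$, freeness amounts to showing that if $x a \in \hatx R x K^\times$ for some $a \in \hatx K$ and $x \in \widetilde X_D$, then $a$ lies in $\hatOOx[K] K^\times$. This is a local condition, so I would verify it prime by prime: at each $\id p$ the relevant statement is exactly part~(1) of the first lemma above, which asserts that the action of $\OOx[K\p] \backslash K\p^\times$ on $R\p^\times \backslash X_{D,\id p}$ is free. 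Assembling these local freeness statements over all $\id p$ gives the global freeness.

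Next I would analyze the action of $\Bil(R)$ on the quotient $X_D / \cl(K)$. The key structural input is part~(2) of the first lemma, namely $X_{D,\id p} = N(R\p) K\p^\times$, which says that locally every special point is obtained from the normalizer by right multiplication by $K\p^\times$. Combined with Proposition~\ref{prop:bil_charact}, which identifies $\Bil(R) \cong \prod_{\id p \mid \id N} \Z/2\Z$ with generators given by the elements $w\p$ of Proposition~\ref{prop:bil_local}, this should show that $X_D / \cl(K)$ is a single orbit under $\widetilde\Bil(R)$, hence under $\Bil(R)$ after quotienting by $\cl(F)$ via the short exact sequence~\eqref{eqn:s.e.s.}; here one uses that $N(\widehat R)$ surjects onto $X_D$ modulo $\hatx R$, $\hatx K$, which follows by gluing the local decompositions $X_{D,\id p} = N(R\p) K\p^\times$ together with the fact that $\id p \nmid \id N$ contributes a trivial normalizer quotient by part~(1) of Proposition~\ref{prop:bil_local}. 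The transitivity then reduces to checking that the normalizer acts transitively on the local special points modulo $R\p^\times$ and $K\p^\times$, which again follows from part~(2) of the lemma.

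For the final assertion about freeness of the $\Bil(R)$-action on $X_D / \cl(K)$ when $(\DK : \id N) = 1$, I would use the second lemma above, which states that if the generator $w\p$ lies in $R\p^\times K\p^\times$ then $K\p/F\p$ is ramified, i.e. $\id p \mid \DK$. The hypothesis $(\DK : \id N) = 1$ means no prime divides both $\DK$ and $\id N$; since the generators $w\p$ are indexed by $\id p \mid \id N$, the contrapositive of the second lemma shows that no nontrivial element of $\Bil(R) \cong \prod_{\id p \mid \id N} \Z/2\Z$ can stabilize a special point, because stabilizing would force some $w\p$ into $R\p^\times K\p^\times$ and hence $\id p \mid \DK$, contradicting coprimality. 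This gives freeness.

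The main obstacle I anticipate is the careful bookkeeping in the transitivity step: one must correctly pass between the global double-coset description of $X_D$ and the local decompositions, ensuring that the gluing of the local statements $X_{D,\id p} = N(R\p) K\p^\times$ genuinely produces a global element of $N(\widehat R)$ moving one special point to another modulo $\hatx R$ and $\hatx K$. The subtlety is that transitivity is asserted only after quotienting by $\cl(K)$, so I must track precisely how the local $K\p^\times$-factors recombine into $\hatx K$ and hence get absorbed into the $\cl(K)$-action, rather than into the $\Bil(R)$-action; conflating these two sources of ambiguity is the most likely place for an error, and the role of Proposition~\ref{prop:bil_charact} in pinning down $\widetilde\Bil(R)$ as a finite group of the expected order $\hF\,2^{\omega(\id N)}$ is what makes the counting work out.
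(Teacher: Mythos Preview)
Your proposal is correct and follows essentially the same approach as the paper, which simply states that the proposition is a consequence of the two preceding lemmas together with Proposition~\ref{prop:bil_local}. Your write-up is an accurate unpacking of that one-line deduction: freeness of $\cl(K)$ from part~(1) of the first lemma, transitivity from part~(2), and freeness of $\Bil(R)$ under the coprimality hypothesis from the contrapositive of the second lemma; the one point worth making explicit in the transitivity step is that the $\cl(F)$-part of $\widetilde\Bil(R)$ acts on $X_D$ through the $\cl(K)$-action (since $\hatx F$ is central), so that on $X_D/\cl(K)$ the action genuinely factors through $\Bil(R)$.
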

\end{samepage}

Let $\eta_D \in \mathcal{M}_\mathbf{k}(R)$ be as in \eqref{eqn:eta_fund} and 
let $\psi_{P_D} \in \mathcal{M}_\mathbf{k}(R,\bbu)$ be as in 
\eqref{eqn:alpha_K}. 
We conclude this section by relating these quaternionic modular forms.

\begin{prop}
 
Assume that $(\id D_K:\id N) = 1$. Then
\[
 \eta_D = 
 \sum_{z \in \Bil(R)} \psi_{P_D} \cdot z\,.
\]
In particular, $\eta_D \in \mathcal{M}_\mathbf{k}(R,\mathbbm{1})^{\Bil(R)}$.
  
\end{prop}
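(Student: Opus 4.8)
The plan is to prove the equality by unfolding both sides as explicit sums indexed by special points, and then to match them using the orbit structure described in Proposition~\ref{prop:action_transitive}. First I would recall that since $(-D,\id a)$ is fundamental we have $\OO[x]=\OO[K]$ for every $x\in X_D$, so that by \eqref{eqn:eta_fund} the left-hand side is $\eta_D = \tfrac1\tK\sum_{x\in X_D}\varphi_{x,P_D}$. On the right-hand side, by \eqref{eqn:alpha_K} and \eqref{eqn:transf_phi} we have
\[
 \sum_{z\in\Bil(R)}\psi_{P_D}\cdot z
 = \tfrac1\tK\sum_{z\in\Bil(R)}\sum_{a\in\cl(K)}\varphi_{z^{-1}a,P_D}\,,
\]
where I am using that $\varphi_{a,P_D}$ depends only on the class of $a$ in $\cl(K)$ and that $\Bil(R)$ acts on these forms. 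The goal then becomes to show that the double sum over $\Bil(R)\times\cl(K)$ reproduces exactly the single sum over $X_D$, each special point appearing once.

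The key step is the bijection furnished by the group actions. By Proposition~\ref{prop:action_transitive}, under the hypothesis $(\DK:\id N)=1$ the group $\cl(K)$ acts freely on $X_D$, and $\Bil(R)$ acts freely and transitively on the quotient $X_D/\cl(K)$. Fixing the base point coming from $1\in\widetilde X_D$ (which lies in $X_D$ precisely because we assumed $\OO[K]\subseteq R$), I would exhibit the orbit map $\Bil(R)\times\cl(K)\to X_D$, $(z,a)\mapsto z^{-1}\cdot 1\cdot a$, and argue it is a bijection: freeness of the $\Bil(R)$-action on $X_D/\cl(K)$ gives injectivity modulo $\cl(K)$, freeness of the $\cl(K)$-action pins down the fiber, and transitivity gives surjectivity. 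Under this bijection each term $\varphi_{z^{-1}a,P_D}$ on the right matches the term $\varphi_{x,P_D}$ on the left for the corresponding $x\in X_D$, using \eqref{eqn:transf_phi} to move the $\Bil(R)$-element through and \eqref{eqn:transf_phi2} together with \eqref{eqn:P_D-Kinv} to absorb the $\cl(K)$-element (since $P_D$ is $K^\times/F^\times$-invariant). This yields $\sum_{z\in\Bil(R)}\psi_{P_D}\cdot z=\tfrac1\tK\sum_{x\in X_D}\varphi_{x,P_D}=\eta_D$.

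The main obstacle I expect is bookkeeping the two levels of quotients correctly, i.e.\ making sure the free/transitive statements of Proposition~\ref{prop:action_transitive} are applied to the right sets and that no special point is counted with the wrong multiplicity. In particular I must check that the left $\Bil(R)$-action and the right $\cl(K)$-action on $X_D$ commute (they do, since one is induced by left multiplication by $N(\widehat R)$ and the other by right multiplication by $\hatx K$), so that the combined map out of the product group is well defined, and that the normalization factor $\tfrac1\tK$ is consistent on both sides. The invariance under $\Bil(R)$ claimed in the final sentence is then immediate: the sum $\sum_{z\in\Bil(R)}\psi_{P_D}\cdot z$ is manifestly $\Bil(R)$-invariant because right-translating the index $z$ by a fixed element of $\Bil(R)$ merely permutes the summands, and $\eta_D$ lies in $\mathcal{M}_\mathbf{k}(R,\bbu)$ because each $\psi_{P_D}$ does, by \eqref{eqn:alpha_K}.
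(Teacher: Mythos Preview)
Your proposal is correct and follows essentially the same route as the paper: expand both sides as sums of $\varphi_{x,P_D}$ using \eqref{eqn:eta_fund}, \eqref{eqn:alpha_K}, and \eqref{eqn:transf_phi}, then identify the double sum over $\Bil(R)\times\cl(K)$ with the single sum over $X_D$ via the free-and-transitive statement of Proposition~\ref{prop:action_transitive}, using $1\in\widetilde X_D$ as base point. Your write-up is simply more explicit than the paper's about the bookkeeping (commutation of the two actions, well-definedness of $\varphi_{x,P_D}$ on $X_D$ via \eqref{eqn:transf_phi2} and \eqref{eqn:P_D-Kinv}), but the argument is the same.
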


\begin{proof}
Since $1 \in \widetilde X_D$, using \eqref{eqn:transf_phi} and 
Proposition~\ref{prop:action_transitive} we get that
\[
   \sum_{z \in \Bil(R)} \psi_{P_D} \cdot z
  = \tfrac1\tK\sum_{z \in \Bil(R)} \sum_{a \in \cl(K)} \varphi_{z^{-1} a,P_D}
  = \tfrac1\tK\sum_{x \in X_D} \varphi_{x,P_D}
  = \eta_D\,.
\]
\end{proof}

The following statement follows from this result and
Proposition~\ref{prop:bil_charact}.

\begin{coro}\label{coro:psi_vs_eta}
 Assume that $(\id D_K:\id N) = 1$. If $\varphi \in 
\mathcal{M}_\mathbf{k}(R,\mathbbm{1})^{\Bil(R)}$, then
\[
   \ll{\varphi,\eta_D}
   = 2^{\omega(\id N)}
   \ll{\varphi,\psi_{P_D}}
   \,.
\]
\end{coro}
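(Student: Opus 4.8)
The plan is to substitute the expression for $\eta_D$ furnished by the preceding proposition and then exploit the $\widetilde\Bil(R)$-invariance of the height pairing recorded in Section~\ref{sect:quat_forms}, together with the hypothesis that $\varphi$ is fixed by $\Bil(R)$.

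First I would expand, using the preceding proposition and the bilinearity of the height pairing,
\[
 \ll{\varphi,\eta_D}
 = \Bigl\langle \varphi, \sum_{z\in\Bil(R)} \psi_{P_D}\cdot z \Bigr\rangle
 = \sum_{z\in\Bil(R)} \ll{\varphi,\psi_{P_D}\cdot z}\,.
\]
Next, for each $z\in\Bil(R)$ I would use that $\varphi\cdot z=\varphi$, which holds because $\varphi\in\mathcal{M}_\mathbf{k}(R,\bbu)^{\Bil(R)}$, to rewrite the inner term as $\ll{\varphi\cdot z,\psi_{P_D}\cdot z}$, and then apply the invariance $\ll{\varphi\cdot z,\psi\cdot z}=\ll{\varphi,\psi}$ from Section~\ref{sect:quat_forms}. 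This gives $\ll{\varphi,\psi_{P_D}\cdot z}=\ll{\varphi,\psi_{P_D}}$ for every $z$, so the sum collapses to $\vert\Bil(R)\vert\,\ll{\varphi,\psi_{P_D}}$. Finally, Proposition~\ref{prop:bil_charact} identifies $\Bil(R)\cong\prod_{\id p\mid\id N}\Z/2\Z$, whence $\vert\Bil(R)\vert=2^{\omega(\id N)}$, yielding the claimed identity.

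The one point requiring care is that the invariance of the height pairing is stated for the action of $\widetilde\Bil(R)$, whereas the sum runs over $\Bil(R)$. I would dispose of this by observing that both $\varphi$ and $\psi_{P_D}$ lie in the $\cl(F)$-fixed subspace $\mathcal{M}_\mathbf{k}(R,\bbu)$, on which the $\widetilde\Bil(R)$-action factors through $\Bil(R)$ via the short exact sequence \eqref{eqn:s.e.s.}; hence choosing any lift $\tilde z\in\widetilde\Bil(R)$ of $z$ yields the same element $\psi_{P_D}\cdot z=\psi_{P_D}\cdot\tilde z$ and likewise for $\varphi$, so the invariance applies verbatim. This is the only genuine subtlety; everything else is routine bookkeeping, and no step presents a serious obstacle.
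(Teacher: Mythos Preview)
Your proof is correct and follows exactly the approach the paper intends: it invokes the preceding proposition to write $\eta_D=\sum_{z\in\Bil(R)}\psi_{P_D}\cdot z$, uses the $\Bil(R)$-invariance of $\varphi$ together with the invariance of the height pairing, and concludes via Proposition~\ref{prop:bil_charact} that $\lvert\Bil(R)\rvert=2^{\omega(\id N)}$. The care you take in passing between $\widetilde\Bil(R)$ and $\Bil(R)$ via the exact sequence~\eqref{eqn:s.e.s.} is a detail the paper leaves implicit, but your resolution is precisely the intended one.
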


\section{Main theorem}\label{sect:main}

Let $\mathbf{k} \in \Z_{\geq 0}^\mathbf{a}$, let $\id N \subsetneq \OO[F]$ be an 
integral ideal, and let $g\in \mathcal{S}_{\mathbf{2}+2\mathbf{k}}(\id{N},\bbu)$ 
be a normalized cuspidal newform with Atkin-Lehner eigenvalues 
$\varepsilon_g(\id p)$ for $\id p \mid \id N$, as in the introduction. Let 
$\mathscr{E}$ denote the set of functions $\varepsilon:\{\id p \,:\, \id p \mid 
\id N\} \to \{\pm 1\}$ satisfying
\begin{equation}\label{eqn:permitted_eps}
\varepsilon(\id p)^{v\p (\id N)} = \varepsilon_g(\id p) \qquad \forall \, \id p 
\mid \id N\,.
\end{equation}
Note that this set is not empty. This is equivalent to Hypothesis \ref{hyp:JL}.

Given $D \in F^+$ we let $K = F(\sqrt{-D})$, and we denote by $\chi_D$ the 
Hecke character corresponding to the extension $K / F$.
Given $\varepsilon\in\mathscr{E}$ we say that $D$ is of \emph{type
$\varepsilon$} when $\chi_D(\id p)=\varepsilon(\id p)$
for all $\id p\mid\id N$.
In particular the conductor of $\chi_D$ is  prime to $\id N$.
Hypothesis \ref{hyp:paridad} implies that for such $D$ the sign of the 
functional equation for $L_D(s,g)$ equals $1$. 

Let $B$ be the quaternion algebra over $F$ ramified exactly at
$\mathbf{a}\cup\mathcal{W}^-$, which is possible
by Hypothesis \ref{hyp:paridad}.
Fix $\varepsilon \in \mathscr{E}$, 
and let $R = R_\varepsilon\subseteq B$ be an order with discriminant $\id N$ and 
Eichler invariant
$e(R\p) = \varepsilon(\id p)$ for every $\id p \mid \id N$. Such order exists by 
\eqref{eqn:permitted_eps}, and belongs to the class of orders considered in 
Section \ref{sect:orders}.

Note that for $D$ of type $\varepsilon$
the set $\Sigma_D$ given in \eqref{eq:SigmaD}
is precisely the ramification of $B$
and moreover $\OO[K]\subseteq R$,
as required by Theorem~\ref{thm:xue}
and Corollary~\ref{coro:xue}.

Let $\pi$ be the irreducible automorphic representation of
$\GL_2$ corresponding to $g$.
For every prime $\id p$ where $B$ is ramified
$v\p(N)$ is odd by Hypothesis~\ref{hyp:JL},
hence the local component of $\pi$ at $\id p$ is square integrable.
It follows that there is an irreducible automorphic representation
$\pi_B$ of $\hatx B$
which corresponds to $\pi$ under the Jacquet-Langlands map.

In \cite[Proposition 8.6]{gross-local} it is shown that
$\hatx R$ fixes a unique line in the representation space of $\pi_B$.
This line gives an explicit quaternionic modular form
$\varphi_\varepsilon \in \mathcal{S}_\mathbf{k}(R,\bbu)$
which is well defined up to a constant.

\begin{lemma}\label{lem:sect5}
The quaternionic modular form $\varphi_\varepsilon$ is fixed by the action of 
$\Bil(R)$.
\end{lemma}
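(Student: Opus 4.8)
The plan is to reduce the statement to a local computation of Atkin-Lehner eigenvalues, using the explicit description of $\Bil(R)$ provided by Propositions~\ref{prop:bil_charact} and~\ref{prop:bil_local}. By Proposition~\ref{prop:bil_charact} the group $\Bil(R)$ is generated by the classes of the elements $w\p \in R\p \cap N(R\p)$, one for each prime $\id p \mid \id N$, whose explicit shape is recorded in Proposition~\ref{prop:bil_local}. It therefore suffices to prove that $\varphi_\varepsilon \cdot w\p = \varphi_\varepsilon$ for every $\id p \mid \id N$.

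First I would observe that each $w\p$ acts on $\mathcal{S}_\mathbf{k}(R,\bbu)$ as an involution: from the matrices in Proposition~\ref{prop:bil_local} one computes $w\p^{\,2} = \pi\p^{v\p(\id N)}$, a central scalar, so the class of $w\p$ has order two in $\Bil(R)$. Since the action of $\widetilde\Bil(R)$ commutes with every Hecke operator, $\varphi_\varepsilon \cdot w\p$ is again a Hecke eigenform with the same eigenvalues as $\varphi_\varepsilon$ at all primes away from $\id N$. Invoking strong multiplicity one together with the local newvector theory for the orders $R\p$---which have nonzero Eichler invariant by construction---the space of forms in $\mathcal{S}_\mathbf{k}(R,\bbu)$ carrying these eigenvalues is the line $\C\,\varphi_\varepsilon$. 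Hence $\varphi_\varepsilon \cdot w\p = \mu\p\,\varphi_\varepsilon$ for some sign $\mu\p \in \{\pm 1\}$.

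The heart of the argument is to pin down $\mu\p$. I would interpret $w\p$ as the local Atkin-Lehner operator attached to $R\p$ and compute its eigenvalue on the newvector of the local component $\pi_{B,\id p}$ of the Jacquet-Langlands transfer of $g$, comparing it with the Atkin-Lehner eigenvalue $\varepsilon_g(\id p)$ of $g$ itself. This comparison must account for the sign change of the local root number under the Jacquet-Langlands correspondence at the ramified primes $\id p \in \mathcal{W}^-$, as well as for the dependence on the Eichler invariant $e(R\p) = \varepsilon(\id p)$ and on the conductor exponent $v\p(\id N)$; carried out uniformly for the orders of Eichler invariant $\pm 1$ of Section~\ref{sect:orders}, and using the explicit form of $w\p$, it should yield
\[
 \mu\p = \varepsilon_g(\id p)\,\varepsilon(\id p)^{v\p(\id N)}\,.
\]
This local calculation is the step I expect to be the main obstacle.

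Granting it, the conclusion is immediate. By the defining relation \eqref{eqn:permitted_eps} of the set $\mathscr{E}$ we have $\varepsilon(\id p)^{v\p(\id N)} = \varepsilon_g(\id p)$, so that $\mu\p = \varepsilon_g(\id p)^2 = 1$ for every $\id p \mid \id N$. Thus each generator $w\p$ fixes $\varphi_\varepsilon$, and since these generate $\Bil(R)$ the form $\varphi_\varepsilon$ is fixed by the whole group, as claimed.
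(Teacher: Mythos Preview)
Your approach is essentially the same as the paper's: reduce to the generators $w\p$, argue that $\varphi_\varepsilon$ is an eigenvector by multiplicity one and compatibility of Jacquet--Langlands with Atkin--Lehner, and compute the sign. The paper's proof is shorter only because its eigenvalue formula is simpler than yours: it records directly that $\mu\p = \varepsilon_g(\id p)$ when $B\p$ is split and $\mu\p = -\varepsilon_g(\id p)$ when $B\p$ is ramified, independent of $\varepsilon(\id p)$ and $v\p(\id N)$; since $B$ was chosen to ramify exactly at $\mathcal{W}^- = \{\id p : \varepsilon_g(\id p) = -1\}$, both cases give $\mu\p = 1$ immediately, without appealing to \eqref{eqn:permitted_eps}. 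Your formula $\mu\p = \varepsilon_g(\id p)\,\varepsilon(\id p)^{v\p(\id N)}$ is not really the general local identity one would prove---the Atkin--Lehner sign on the quaternionic side depends only on the ramification of $B$, not on the Eichler invariant of the order---so the ``main obstacle'' you anticipate dissolves once you phrase it the paper's way.
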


\begin{proof}
Let $\id p$ be a prime dividing $\id N$, and let $w\p \in N(R\p)$
be the generator for $\Bil(R\p)$ given in Proposition
\ref{prop:bil_local}.
Since $w\p$ has order two and normalizes $\hatx R$, it acts on
$\varphi_\varepsilon$ by multiplication by $\delta\p \in \{\pm 1\}$.

When $B$ is split at $\id p$
we have $\delta\p = \varepsilon_g(\id p)$,
and $\delta\p = -\varepsilon_g(\id p)$
when $B$ is ramified at $\id p$
(for instance, see \cite[Theorem 2.2.1]{roberts-thesis}).
Thus $\delta\p = 1$ for every $\id p\mid\id N$
by our choice of $B$,
and the result 
follows since $\{w\p\,:\,\id p\mid\id N\}$ generates $\Bil(R)$.
\end{proof}

Let $c_g$ the positive real number given by
\[
 c_g = {\ll{g,g}}\,
 \:
 \frac{\dF^{\;1/2}}{\hF}
 \:
 \frac
 {c(\mathbf{k})\:C(\id N)}
 {2^{2\omega(\id N)}}\,,
\]
where $\ll{g,g}$ is the Petersson norm of $g$,
$c(\mathbf{k})$ is as in Corollary~\ref{coro:xue}, and $C(\id N)$ is as in 
Theorem~\ref{thm:xue}.

\begin{samepage}
\begin{thm}\label{thm:main_thm}
Let $f_\varepsilon = \vartheta(\varphi_\varepsilon) \in 
\mathcal{S}^+_\mathbf{3/2+k}(4\id{N},\chi_1)$. For every $D \in F^+$
of type $\varepsilon$ such that the conductor of $\chi_D$ is prime to $2\id N$ we have
\begin{equation}\label{eqn:main_thm}
 L_D(1/2,g)=
 c_g\,
 \frac{ c_D}{D^\mathbf{k+1/2}}\,
 \frac{
 \vert \lambda(D,\id a;f_\varepsilon)\vert^2}
 {\ll{\varphi_\varepsilon,\varphi_\varepsilon}}
 \,,
\end{equation}
where $\id a\in\IF$
is the unique ideal such that $(-D,\id a)$ is a fundamental 
discriminant, $c_D$ is the positive rational number given by
$c_D=\mK^{\;2}\,\idn a$,
and $\lambda(D,\id a;f_\varepsilon)$ is the $D$-th Fourier 
coefficient of $f_\varepsilon$ at the cusp $\id a$.
\end{thm}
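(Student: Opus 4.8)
The plan is to assemble the main identity by chaining together the three principal results established in the preceding sections, each of which relates a different incarnation of the same arithmetic quantity. First I would express the Fourier coefficient of $f_\varepsilon = \vartheta(\varphi_\varepsilon)$ in terms of the height pairing. By Proposition~\ref{prop:coef_serie_theta} we have
\[
 \lambda(D,\id a;f_\varepsilon) = \frac{1}{\idn a}\,\ll{\varphi_\varepsilon,\eta_D}\,,
\]
where $\eta_D = \eta_{D,\id a}$ is the special-point form associated to the fundamental discriminant $(-D,\id a)$. Since $D$ is of type $\varepsilon$, the relative discriminant $\DK$ is prime to $\id N$, so $(\DK:\id N)=1$ and Lemma~\ref{lem:sect5} guarantees that $\varphi_\varepsilon$ lies in $\mathcal{M}_\mathbf{k}(R,\bbu)^{\Bil(R)}$. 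This places us exactly in the situation of Corollary~\ref{coro:psi_vs_eta}, which I would invoke to replace $\eta_D$ by the cleaner form $\psi_{P_D}$, picking up the factor $2^{\omega(\id N)}$:
\[
 \ll{\varphi_\varepsilon,\eta_D} = 2^{\omega(\id N)}\,\ll{\varphi_\varepsilon,\psi_{P_D}}\,.
\]

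Next I would pass from the bare height pairing to the $g$-isotypical piece that feeds Corollary~\ref{coro:xue}. Because $\varphi_\varepsilon$ is in Jacquet-Langlands correspondence with $g$ and $\Tg$ is the $g$-isotypical projector, applying $\Tg$ and using self-adjointness of the Hecke operators relative to the height pairing lets me rewrite the relevant geometric quantity $\ll{\Tg\psi_{P_D},\psi_{P_D}}$ in terms of $\vert\ll{\varphi_\varepsilon,\psi_{P_D}}\vert^2$ divided by $\ll{\varphi_\varepsilon,\varphi_\varepsilon}$; this is the standard reproducing-kernel manipulation for an orthogonal projection onto a one-dimensional Hecke eigenspace. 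Feeding the resulting identity into Corollary~\ref{coro:xue} then yields
\[
 L_D(1/2,g) = \ll{g,g}\,\frac{\dF^{1/2}}{\hF}\,\frac{c(\mathbf{k})\,C(\id N)}{\mathcal N(\DK)^{1/2}}\,\frac{\mK^{\,2}}{D^\mathbf{k}}\,\frac{\vert\ll{\varphi_\varepsilon,\psi_{P_D}}\vert^2}{\ll{\varphi_\varepsilon,\varphi_\varepsilon}}\,.
\]
It is worth noting that Corollary~\ref{coro:xue} is applicable precisely because the hypotheses of Theorem~\ref{thm:xue} hold: $\id N\subsetneq\OO[F]$ and $\DK$ is prime to $2\id N$, both of which are part of the standing assumptions on $D$.

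Finally I would substitute back the expression for $\ll{\varphi_\varepsilon,\psi_{P_D}}$ obtained from the first two steps, namely $\idn a\,\lambda(D,\id a;f_\varepsilon) = 2^{\omega(\id N)}\,\ll{\varphi_\varepsilon,\psi_{P_D}}$, so that $\vert\ll{\varphi_\varepsilon,\psi_{P_D}}\vert^2 = \idn a^{\,2}\,2^{-2\omega(\id N)}\,\vert\lambda(D,\id a;f_\varepsilon)\vert^2$. Collecting the constants is then a matter of bookkeeping: the factor $2^{-2\omega(\id N)}$ combines with $C(\id N)$ and the remaining data to form $c_g$, while the surviving $\mK^{\,2}\,\idn a$ — together with one factor of $\idn a$ and the normalization $\mathcal N(\DK)^{1/2}$, which equals $\idn a$ times a square tied to the fundamental discriminant — assembles into $c_D = \mK^{\,2}\,\idn a$ and the power $D^{\mathbf{k}+1/2}$. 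The main obstacle I anticipate is precisely this last reconciliation of constants: one must track carefully how $\mathcal N(\DK)^{1/2}$ relates to $\idn a$ for a fundamental discriminant and verify that the powers of $2^{\omega(\id N)}$ and the $D^\mathbf{k}$ versus $D^{\mathbf{k}+1/2}$ normalizations match up exactly, since the $L$-function center of symmetry at $s=1/2$ and the half-integral weight $\mathbf{3/2+k}$ both contribute half-integral powers that must cancel correctly against the Gegenbauer normalization $\ll{P_D,P_D}=D^\mathbf{k}\prod_\tau s_{k_\tau}$ hidden inside $\psi_{P_D}$.
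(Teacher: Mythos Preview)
Your proposal is correct and matches the paper's proof essentially step for step: Proposition~\ref{prop:coef_serie_theta}, Lemma~\ref{lem:sect5}, Corollary~\ref{coro:psi_vs_eta}, the one-dimensional projection identity $\Tg\psi_{P_D} = \tfrac{\ll{\psi_{P_D},\varphi_\varepsilon}}{\ll{\varphi_\varepsilon,\varphi_\varepsilon}}\,\varphi_\varepsilon$, and Corollary~\ref{coro:xue}. The constant reconciliation you flag as an obstacle is immediate once you note that $(-D,\id a)$ fundamental means $\DK = D\id a^2$ as ideals, so $\mathcal N(\DK)^{1/2} = D^{\mathbf{1/2}}\,\idn a$, which absorbs one factor of $\idn a$ and turns $D^{\mathbf{k}}$ into $D^{\mathbf{k+1/2}}$ exactly as required.
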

\end{samepage}

\begin{rmk}
 
Hypothesis \ref{hyp:paridad} implies that the sign of the functional 
equation for $L(s,g)$ equals $(-1)^\mathbf{k}$. If Hypothesis \ref{hyp:peso} 
does not hold, then both sides of \eqref{eqn:main_thm} vanish trivially, since 
$\vartheta = 0$ and $L(1/2,g) = 0$. In particular \eqref{eqn:main_thm} still 
holds, but it can not be used to compute $L(1/2,g\otimes \chi_D)$. This issue 
will be addressed in a future work by the authors.

\end{rmk}

\begin{proof}

Let $\Tg$ be the polynomial in the Hecke operators prime to $\id N$ giving the 
$g$-isotypical projection.
Let $\psi_{P_D}$ and $\eta_D$ be as in Corollary~\ref{coro:psi_vs_eta}.
Since $\Tg\psi_{P_D}$ is the $\varphi_\varepsilon$-isotypical projection of 
$\psi_{P_D}$ we have that $\Tg\psi_{P_D} 
= 
\frac{\ll{\psi_{P_D},\varphi_\varepsilon}}{\ll{\varphi_\varepsilon,
\varphi_\varepsilon }}
\, \varphi_\varepsilon$.
Combining this with Proposition~\ref{prop:coef_serie_theta}, 
Corollary~\ref{coro:psi_vs_eta} and Lemma~\ref{lem:sect5} we get that
\[
  \ll{\Tg\psi_{P_D}, \psi_{P_D}} = 
 \frac{\vert \ll{\psi_{P_D},\varphi_\varepsilon}
 \vert^2}{\ll{\varphi_\varepsilon,\varphi_\varepsilon}}
 = \frac{\vert \ll{\eta_D,\varphi_\varepsilon}
 \vert^2}{2^{2\omega(\id N)} \, 
\ll{\varphi_\varepsilon,\varphi_\varepsilon}}
 = \frac{\idn a^2}{2^{2\omega(\id N)}} \, 
\frac{\vert \lambda(D,\id 
a;f_\varepsilon)\vert^2}{\ll{\varphi_\varepsilon,\varphi_\varepsilon}}\,.
\]
Then \eqref{eqn:main_thm} follows from Corollary~\ref{coro:xue}.
\end{proof}

\begin{samepage}
\begin{coro}\label{coro:family}
 
Assume that $L(1/2,g) \neq 0$. Then $f_\varepsilon 
\neq 0$ and it maps to $g$ under the Shimura correspondence. Moreover, the set 
$\{f_\varepsilon \,:\, \varepsilon \in \mathscr{E}\}$ is linearly independent.
 
\end{coro}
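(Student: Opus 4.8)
The plan is to extract both assertions from the main identity~\eqref{eqn:main_thm}, the crux being a local selection rule forcing the Fourier coefficients of $f_\varepsilon$ to be supported on discriminants of type $\varepsilon$. First I would prove this rule: if $(-D,\id a)$ is a fundamental discriminant with $\DK$ prime to $\id N$ and $D$ is \emph{not} of type $\varepsilon$, then $\lambda(D,\id a;f_\varepsilon)=0$. By Proposition~\ref{prop:coef_serie_theta} together with \eqref{eqn:eta_fund} it suffices to show that the set of special points $X_D$ attached to $R=R_\varepsilon$ is empty; and since $X_D\neq\varnothing$ exactly when $\OO[K]$ is contained in a conjugate of $R_\varepsilon$ at every prime, this is a purely local question at the prime $\id p\mid\id N$ where $\chi_D(\id p)\neq\varepsilon(\id p)$. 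There the Eichler invariant $e(R_{\varepsilon,\id p})=\varepsilon(\id p)$ distinguishes the split local type ($\varepsilon(\id p)=1$) from the non-split one ($\varepsilon(\id p)=-1$), while $\chi_D(\id p)$ records whether $\id p$ is split or inert in $K$; a direct check with the explicit models~\eqref{eqn:e=1} and \eqref{eqn:e=-1}---and, at the primes of $\mathcal W^-$, the fact that $\OO[K]$ sits in the maximal order of the division algebra only when $K\p$ is a field---shows that $\OO[K\p]$ embeds into $R_{\varepsilon,\id p}$ precisely when $\chi_D(\id p)=\varepsilon(\id p)$. Hence $D$ not of type $\varepsilon$ forces $X_D=\varnothing$ and $\lambda(D,\id a;f_\varepsilon)=0$.

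With the selection rule available I would settle the statements about a single $f_\varepsilon$. Every $D$ of type $\varepsilon$ makes the sign of the functional equation of $L_D(s,g)$ equal to $1$, so a nonvanishing theorem for quadratic twists over $F$ (of Friedberg--Hoffstein / Waldspurger type, restricted to the congruence conditions defining type $\varepsilon$) yields infinitely many such $D$ with $L(1/2,g\otimes\chi_D)\neq0$; combined with the hypothesis $L(1/2,g)\neq0$ this gives $L_D(1/2,g)\neq0$, whereupon \eqref{eqn:main_thm} forces $\lambda(D,\id a;f_\varepsilon)\neq0$ and hence $f_\varepsilon\neq0$. To see that $f_\varepsilon$ maps to $g$ under Shimura I would use that $\vartheta$ is Hecke-linear: as $\varphi_\varepsilon$ is a Hecke eigenform sharing, through Jacquet--Langlands, the eigenvalues of $g$ away from $\id N$, its image $f_\varepsilon$ is a nonzero cuspidal eigenform in $\mathcal S^+_\mathbf{3/2+k}(4\id N,\chi_1)$ whose eigenvalue system matches that of $g$, so $g$ is its Shimura lift.

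Finally, I would prove linear independence by separating supports. Suppose $\sum_{\varepsilon\in\mathscr E}c_\varepsilon f_\varepsilon=0$ and fix $\varepsilon_0\in\mathscr E$; the previous step furnishes a fundamental discriminant $(-D_0,\id a_0)$ of type $\varepsilon_0$ with $\lambda(D_0,\id a_0;f_{\varepsilon_0})\neq0$. Since distinct members of $\mathscr E$ differ at some $\id p\mid\id N$, the types are mutually exclusive, so $D_0$ is not of type $\varepsilon$ for any $\varepsilon\neq\varepsilon_0$ and the selection rule gives $\lambda(D_0,\id a_0;f_\varepsilon)=0$ there. Taking the $(D_0,\id a_0)$-Fourier coefficient of the relation leaves $c_{\varepsilon_0}\lambda(D_0,\id a_0;f_{\varepsilon_0})=0$, hence $c_{\varepsilon_0}=0$; as $\varepsilon_0$ was arbitrary, the family is linearly independent. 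I expect the main obstacle to be the nonvanishing input: one must invoke (or establish) a twisted central-value nonvanishing result over the totally real field $F$ adapted to the type-$\varepsilon$ congruences, which is exactly where $L(1/2,g)\neq0$ and the sign computation enter; granting that, the selection rule and the separation argument are essentially formal.
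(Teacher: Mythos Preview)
Your proposal is correct and follows essentially the same route as the paper's proof. The paper invokes \cite[Th\'eor\`eme~4]{waldspu-corresp} for the nonvanishing input you flag as the main obstacle, and then simply asserts the vanishing $\lambda(D_\varepsilon,\id a_\varepsilon;f_{\varepsilon'})=0$ for $\varepsilon'\neq\varepsilon$ without spelling out the local embedding argument you give; your explicit selection rule via the Eichler invariant is the mechanism behind that assertion, so your write-up is in fact more detailed than the paper's at that step.
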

\end{samepage}
 
In particular, this proves \cite[Conjecture 5.6]{yo}.
 
\begin{proof}
 
By hypotheses \ref{hyp:paridad} and \ref{hyp:peso} the sign of the functional equation for 
$L(s,g)$ equals $1$. Hence by \cite[Th\'{e}or\`{e}me 4]{waldspu-corresp} for 
every $\varepsilon \in \mathscr E$ there exists $D_\varepsilon \in F^+$ of type 
$\varepsilon$ with ${\id D}_\varepsilon = {\id D}_{F(\sqrt{-D_\varepsilon})}$
prime to $2\id N$ such that $L(1/2, g\otimes \chi_{D_\varepsilon}) \neq 
0$.
Then by \eqref{eqn:main_thm} we have that $\lambda(D_\varepsilon,\id 
a_\varepsilon;f_\varepsilon) \neq 0$, where $(-D_\varepsilon, \id 
a_\varepsilon)$ is the discriminant satisfying $D_\varepsilon \id 
a_\varepsilon^2 = \mathfrak{D}_\varepsilon$.
This, together with the Hecke-linearity of the map $\vartheta$, proves the the 
first assertion. 
The second assertion follows from the fact that if $\varepsilon' \neq 
\varepsilon$ then $\lambda(D_\varepsilon,\id a_\varepsilon;f_{\varepsilon'})=0$.
\end{proof}

We say that $D\in F^+$ is \emph{permitted}
if the conductor of $\chi_D$ is prime to $2\id N$ and 
$\chi_D(\id p)=\varepsilon_g(\id p)$
for all $\id p\mid\id N$ such that
$v\p(\id N)$ is odd.
By Hypothesis~\ref{hyp:JL},
every permitted $D$ is of type $\varepsilon$
for some $\varepsilon\in\mathscr{E}$.
\begin{coro} \label{coro:main}
There exists $f \in 
\mathcal{S}^+_\mathbf{3/2+k}(4\id{N},\chi_1)$ whose Fourier coefficients 
    satisfy
 \[
  L_D(1/2,g)= \frac{c_D}{D^\mathbf{k+1/2}} \,\vert \lambda(D,\id 
a;f)\vert^2
 \]
 for every permitted $D$, where $\id a\in\IF$ is the unique
 ideal such that $(-D,\id a)$ is a fundamental discriminant.
 Moreover, if $L(1/2,g) \neq 0$, 
then $f\neq 0$ 
and it maps to $g$ under the Shimura correspondence.
\end{coro}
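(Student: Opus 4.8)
The plan is to assemble $f$ as a suitably normalized sum of the forms $f_\varepsilon$ produced by Theorem~\ref{thm:main_thm}, exploiting that each $f_\varepsilon$ ``sees'' only the discriminants of its own type. First I would record, as in the statement of Theorem~\ref{thm:main_thm}, that $f_\varepsilon = \vartheta(\varphi_\varepsilon) \in \mathcal{S}^+_\mathbf{3/2+k}(4\id N,\chi_1)$ for every $\varepsilon\in\mathscr{E}$, so that all the $f_\varepsilon$ lie in one and the same space. Since $c_g>0$ and $\ll{\varphi_\varepsilon,\varphi_\varepsilon}>0$, I would set
\[
 f = \sum_{\varepsilon\in\mathscr{E}} \sqrt{\tfrac{c_g}{\ll{\varphi_\varepsilon,\varphi_\varepsilon}}}\; f_\varepsilon \;\in\; \mathcal{S}^+_\mathbf{3/2+k}(4\id N,\chi_1)\,,
\]
the scalars being fixed positive reals independent of $D$.

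The heart of the argument is the vanishing of the ``cross terms''. Given a permitted $D$, the value $\chi_D(\id p)$ is determined by $D$ for every $\id p\mid\id N$ (here $\DK$ is prime to $\id N$), so there is a \emph{unique} $\varepsilon\in\mathscr{E}$ for which $D$ is of type $\varepsilon$, namely $\varepsilon(\id p)=\chi_D(\id p)$. I claim $\lambda(D,\id a;f_{\varepsilon'})=0$ whenever $\varepsilon'\neq\varepsilon$. Indeed, by Proposition~\ref{prop:coef_serie_theta} this coefficient equals $\idn a^{-1}\ll{\varphi_{\varepsilon'},\eta_{D,\id a}}$, where $\eta_{D,\id a}$ is built from the special points $X_{D,\id a}$ attached to the order $R_{\varepsilon'}$. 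If $\varepsilon'\neq\varepsilon$ there is a prime $\id p\mid\id N$ with $\chi_D(\id p)\neq\varepsilon'(\id p)=e(R_{\varepsilon',\id p})$; since $(-D,\id a)$ is fundamental, $\OO[D,\id a]=\OO[K]$ is maximal, and the local embedding theory at $\id p$ shows that $\OO[K\p]$ embeds into $R_{\varepsilon',\id p}$ exactly when the splitting type $\chi_D(\id p)$ matches the Eichler invariant $e(R_{\varepsilon',\id p})$. Hence $X_{D,\id a}=\varnothing$, so $\eta_{D,\id a}=0$ and the coefficient vanishes. This local matching is the main obstacle, and it is precisely the fact already invoked in the proof of Corollary~\ref{coro:family}. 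Granting it, for permitted $D$ of type $\varepsilon$ only the $\varepsilon$-term survives, so $\vert\lambda(D,\id a;f)\vert^2 = \tfrac{c_g}{\ll{\varphi_\varepsilon,\varphi_\varepsilon}}\,\vert\lambda(D,\id a;f_\varepsilon)\vert^2$, and substituting this into \eqref{eqn:main_thm} collapses the constant $c_g$ and yields $L_D(1/2,g)=\tfrac{c_D}{D^\mathbf{k+1/2}}\,\vert\lambda(D,\id a;f)\vert^2$.

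For the final assertion, suppose $L(1/2,g)\neq 0$. By Corollary~\ref{coro:family} every $f_\varepsilon$ is nonzero, maps to $g$ under the Shimura correspondence, and admits some $D_\varepsilon$ of type $\varepsilon$ with $\lambda(D_\varepsilon,\id a_\varepsilon;f_\varepsilon)\neq 0$. Fixing one such $\varepsilon$, the cross-term vanishing gives $\lambda(D_\varepsilon,\id a_\varepsilon;f)=\sqrt{c_g/\ll{\varphi_\varepsilon,\varphi_\varepsilon}}\;\lambda(D_\varepsilon,\id a_\varepsilon;f_\varepsilon)\neq 0$, so $f\neq 0$. Finally, since $\vartheta$ is Hecke-linear and each $\varphi_\varepsilon$ carries the Hecke eigenvalues of $g$ away from $\id N$, the forms $f_\varepsilon$ all share one system of eigenvalues; thus their nonzero combination $f$ is again an eigenform with that system, and therefore maps to $g$ under the Shimura correspondence.
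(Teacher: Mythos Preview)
Your proof is correct and follows essentially the same route as the paper: you define $f$ by the identical linear combination $f=c_g^{1/2}\sum_{\varepsilon\in\mathscr{E}}f_\varepsilon/\ll{\varphi_\varepsilon,\varphi_\varepsilon}^{1/2}$, and then invoke Theorem~\ref{thm:main_thm} together with the cross-term vanishing $\lambda(D,\id a;f_{\varepsilon'})=0$ for $\varepsilon'\neq\varepsilon$. The paper's proof is a one-liner that defers this vanishing to the proof of Corollary~\ref{coro:family}, whereas you spell out the underlying reason (the mismatch between $\chi_D(\id p)$ and $e(R_{\varepsilon',\id p})$ forces $X_{D,\id a}=\varnothing$ for the order $R_{\varepsilon'}$, hence $\eta_{D,\id a}=0$); this extra detail is welcome and accurate.
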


In particular, this proves \cite[Conjecture 2.8]{RTV}.

\begin{proof}
 This follows from Theorem~\ref{thm:main_thm} and Corollary~\ref{coro:family}, 
 letting
 \[
    f = c_g^{1/2}\displaystyle\sum_{\varepsilon \in \mathscr{E}} 
\frac{f_\varepsilon}{\ll{\varphi_\varepsilon,\varphi_\varepsilon}^{1/2}}
    \,.
    \qedhere
 \]
\end{proof}

\end{document}